\documentclass[12pt]{article}
\usepackage{amsmath, amssymb, amsfonts, amsthm, color}
\usepackage{hyperref,verbatim}
\usepackage[margin=1cm]{caption}
\usepackage{tikz}

\usetikzlibrary{arrows,shapes,snakes,automata,backgrounds,petri,patterns}

\newtheorem{proposition}{Proposition}[section]
\newtheorem{lemma}[proposition]{Lemma}
\newtheorem{corollary}[proposition]{Corollary}
\newtheorem{theorem}[proposition]{Theorem}
\newtheorem{claim}{Claim}

\newtheorem{observation}[proposition]{Observation}

\newtheorem{lem}[proposition]{Lemma}

\newtheorem{thm}[proposition]{Theorem}
\newtheorem{conj}[proposition]{Conjecture}






\newcommand\floor[1]{\left\lfloor#1\right\rfloor}
\newcommand\card[1]{\left|#1\right|}
\def\chil{\chi_{\ell}}
\def\chip{\chi_{p}}

\DeclareMathOperator{\mad}{mad}
\DeclareMathOperator{\ad}{ad}

\textwidth6.5in
\voffset-1in
\textheight9.5in
\hoffset-.6in

\newcommand\blfootnote[1]{%
  \begingroup
  \renewcommand\thefootnote{}\footnote{#1}%
  \addtocounter{footnote}{-1}%
  \endgroup
}

\begin{document}
\font\smallrm=cmr8

\phantom{a}\vskip .25in
\centerline{{\large \bf  Planar Graphs of Girth at least Five are Square $(\Delta + 2)$-Choosable}}
\vskip.4in
\centerline{{\bf Marthe Bonamy}%
\footnote{\texttt{marthe.bonamy@u-bordeaux.fr}. 
This author is supported by the ANR Grant EGOS (2012-2015) 12~JS02~002~01.}}
\smallskip
\centerline{CNRS, LaBRI, Universit\'e de Bordeaux}
\centerline{Bordeaux, France}
\medskip
\centerline{and}

\centerline{{\bf Daniel W. Cranston}%
\footnote{\texttt{dcranston@vcu.edu}.
This author is partially supported by NSA Grant H98230-15-1-0013.}}
\smallskip
\centerline{Department of Mathematics and Applied Mathematics}
\centerline{Virgina Commonwealth University}
\centerline{Richmond, Virginia, USA, 23284}
\medskip
\centerline{and}

\medskip
\centerline{{\bf Luke Postle}%
\footnote{\texttt{lpostle@uwaterloo.ca}.
This author is partially supported by NSERC Discovery Grant No. 2014-06162.}} 
\blfootnote{\copyright~$\langle2018\rangle$. This manuscript version is made
available under the CC-BY-NC-ND 4.0 license
\url{http://creativecommons.org/licenses/by-nc-nd/4.0/}}
\smallskip
\centerline{Department of Combinatorics and Optimization}
\centerline{University of Waterloo}
\centerline{Waterloo, ON, Canada, N2l 3G1}

\vskip 1in \centerline{\bf ABSTRACT}
\bigskip

{
\noindent
We prove a conjecture of Dvo\v{r}\'ak, Kr\'al, Nejedl\'y, and \v{S}krekovski
that planar graphs of girth at least five with maximum degree $\Delta$ are
square ($\Delta+2$)-colorable for large enough $\Delta$.  In fact, we prove the
stronger statement that such graphs are square ($\Delta+2$)-choosable and even
square ($\Delta+2$)-paintable.
}

\vfill \baselineskip 11pt \noindent \date.
\vfil\eject
\baselineskip 18pt

\section{Introduction}
\label{sec:intro}
Graph coloring is a central area of research in discrete mathematics.
Historically, much work has focused on coloring planar graphs, particularly in
an effort to prove the 4 Color Theorem.  Since its proof
in 1976, research has expanded to numerous related problems.  One that has
received significant attention is coloring the \emph{square} $G^2$ of a planar
graph $G$, where $V(G^2)=V(G)$ and $uv\in E(G^2)$ if $\textrm{dist}_G(u,v)\le 2$.
Wegner~\cite{wegner77} conjectured that every planar graph $G$ with maximum degree
$\Delta\ge 8$ satisfies $\chi(G^2)\le\floor{\frac{3\Delta}2}+1$. He also
constructed graphs showing that this number of colors may be needed (his
construction is a minor variation on that shown in
Figure~\ref{fig:girth4DeltaC}, which requires $\floor{\frac{3\Delta}2}$ colors).  The
\emph{girth} of a graph $G$, denoted $g(G)$, is the length of its shortest cycle.
Since Wegner's construction contains many 4-cycles, it is natural to ask about
coloring the square of a planar graph $G$ with girth at least 5.
First, we need a few more definitions.

A \emph{list assignment} $L$ for a graph $G$ assigns to each vertex $v\in V(G)$ a list
of allowable colors $L(v)$.  A proper \emph{$L$-coloring} $\varphi$ is a proper vertex
coloring of $G$ such that $\varphi(v)\in L(v)$ for each $v\in V(G)$.  A graph $G$ is
\emph{$k$-choosable} if $G$ has a proper $L$-coloring from each list assignment $L$
with $|L(v)|= k$ for each $v\in V(G)$.  The \emph{list chromatic number}
$\chil(G)$ is the minimum $k$ such that $G$ is $k$-choosable.
Finally, a graph $G$ is \emph{square $k$-choosable} if $G^2$ is $k$-choosable.

\begin{conj}[Wang and Lih~\cite{WangL03}]
For every $k\ge 5$ there exists $\Delta_k$ such that if $G$ is a planar graph
with girth at least $k$ and $\Delta\ge \Delta_k$, then $\chi(G^2)=\Delta+1$.
\label{conjecture1}
\end{conj}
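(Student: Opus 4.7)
The plan is to argue by contradiction, combining a minimum-counterexample argument with the discharging method. Suppose for some $k\ge 5$ the conjecture fails; then for every choice of $\Delta_k$ there is a planar graph $G$ of girth at least $k$ with $\Delta(G)\ge\Delta_k$ and $\chi(G^2)\ge\Delta+2$. Fix $\Delta_k$ large in terms of $k$, and choose such a $G$ minimizing $|V(G)|+|E(G)|$. The goal is to extract enough local structural information about $G$ to contradict planarity through a charge count.

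First I would build a catalog of reducible configurations. For a square coloring, a vertex $v$ sees at most $\sum_{u\sim v}(d(u)-1)+d(v)$ vertices in $G^2$, so if $v$ has few vertices within distance $2$ we can color $G-v$ by minimality and extend greedily. Easy reducible configurations include $1$-vertices whose neighbor has moderate degree, short paths of $2$-vertices, and $3$-vertices whose neighborhoods are ``light.'' Deeper reducible configurations arise by deleting a set $S$ and reextending via a Hall-type argument on the bipartite graph between $S$ and available colors, or by Kempe-chain recolorings of a coloring of $G-S$. Because the hypothesis is asymptotic in $\Delta$, we may split $V(G)$ into ``heavy'' vertices of degree at least $M=M(k)$ and ``light'' vertices, and exploit the fact that any heavy vertex has many light neighbors under planarity and girth at least $5$.

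Next, assign initial charges $\mu(v)=(k-2)d(v)-2k$ and $\mu(f)=2\ell(f)-2k$, where $\ell(f)$ is the length of face $f$. Euler's formula gives total charge $-4k$, and since the girth is at least $k$ every face satisfies $\mu(f)\ge 0$. Thus all the negative charge lives on vertices of degree less than $\frac{2k}{k-2}$ (only $2$-vertices when $k=5$). I would then design discharging rules pushing charge from heavy vertices across incident faces to nearby low-degree vertices, with amounts tuned to the reducible configurations established above. The target is to verify that every vertex and every face ends with nonnegative charge, contradicting the negative total.

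The main obstacle is the tightness of the bound: the conjecture demands $\Delta+1$ colors, while the paper only claims $(\Delta+2)$-choosability. That extra color gives one unit of slack at every extension step, and without it the greedy extension fails exactly when a reducible configuration is on the boundary of being reducible. Closing the gap to $\Delta+1$ will likely require either substantially more intricate reducible configurations (for example, exploiting the Combinatorial Nullstellensatz/Alon--Tarsi method, or iterated Kempe swaps across several vertices of $S$), or a refined discharging using fractional or potential-based rules that track not merely the degree sequence but the joint distribution of heavy and light vertices within each $2$-neighborhood. Reconciling these two refinements while keeping the local charge transfers compatible with planar girth constraints is where I expect the argument to be hardest, and is presumably the reason the present paper settles only for $(\Delta+2)$-choosability.
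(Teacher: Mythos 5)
The statement you are attempting to prove is a conjecture that the paper \emph{cites}, not one that it proves --- and in fact the paper itself explains that this conjecture is \emph{false} for $k=5$ and $k=6$, which is precisely why no proof along your lines (or any other) can succeed. Borodin, Glebov, Ivanova, Neustroeva, and Tashkinov constructed, for every integer $D\ge 2$, a planar graph of girth $6$ and maximum degree $D$ whose square has chromatic number at least $\Delta+2$; and Figure~\ref{fig:girth5Delta1} of the paper exhibits, for every $p$, a planar graph $G_p$ of girth $5$ with $\Delta(G_p)=p$ and $\chi(G_p^2)=\Delta+2$: the vertex $u$ and its $p$ neighbors are pairwise within distance two and so need $p+1$ colors, and $v$ is within distance two of all of them, forcing a $(p+2)$nd color. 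Since such $G_p$ exist for arbitrarily large $\Delta$, no threshold $\Delta_5$ or $\Delta_6$ can make $\chi(G^2)=\Delta+1$ hold. The Wang--Lih conjecture is known to be true only for $k\ge 7$ (Borodin et~al.). For $k=5$ and $k=6$ the best one can hope for is $\chi(G^2)\le\Delta+2$, and it is exactly this corrected statement (Conjecture~\ref{conjecture2}, due to Dvo\v{r}\'ak, Kr\'al, Nejedl\'y, and \v{S}krekovski) that the present paper establishes, in the stronger choosability and paintability forms.

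You correctly sensed that the one-color gap between $\Delta+1$ and $\Delta+2$ is the crux of the difficulty, but you diagnosed it as a technical obstacle to be overcome by sharper reducible configurations or Alon--Tarsi arguments. It is not an obstacle; it is an impossibility. No amount of Kempe-chain or Nullstellensatz machinery can color $G_p^2$ with $\Delta+1$ colors, because $G_p^2$ contains a clique of size $\Delta+2$. Before launching a discharging proof, one should always check the extremal examples; here they refute the claim outright. Separately, it is worth noting that even for the correct $(\Delta+2)$-version the paper deliberately avoids discharging --- it leverages only the fact that every planar map has a vertex of degree at most five, via the auxiliary multigraph $G''$ and Lemma~\ref{prop:40} --- so the discharging framework you sketch is also not the route the authors take for the statement they actually prove.
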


Borodin et~al.~\cite{BorodinGINT04} proved the Wang--Lih Conjecture for $k\ge
7$.  Specifically, they showed that $\chi(G^2)=\Delta+1$
whenever $G$ is a planar graph with girth at least $7$ and $\Delta\ge 30$.  In
contrast, for each integer $D$ at least 2, they constructed a planar
graph $G_D$ with girth 6 and $\Delta=D$ such that $\chi(G_D^2)\ge \Delta+2$.

In 2008, Dvo\v{r}\'ak et~al.~\cite{DvorakKNS08} showed that 
for $k=6$ the Wang--Lih Conjecture fails only by 1.
More precisely, let $G$ be a planar graph with girth at least $6$.
They showed that if $\Delta\ge8821$, then $\chi(G^2)\le \Delta+2$.
Borodin and Ivanova strengthened this result: in 2009 they
showed~\cite{BorodinI09girth6} that 
$\Delta\ge 18$ implies
$\chi(G^2)\le\Delta+2$ (and also~\cite{BorodinI09girth6choice} that
$\Delta\ge 36$ implies $\chil(G^2)\le \Delta+2$).
Dvorak et~al. conjectured that a similar result holds for girth 5.

\begin{conj}
[Dvo\v{r}\'ak, Kr\'al, Nejedl\'y, and \v{S}krekovski~\cite{DvorakKNS08}]
There exists $\Delta_0$ such that if $G$ is a planar graph
with girth at least $5$ and $\Delta(G)\ge \Delta_0$ then
$\chi(G^2)\le\Delta(G)+2$.
\label{conjecture2}
\end{conj}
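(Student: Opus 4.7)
The plan is to take a minimum counterexample and combine reducible-configuration analysis with a discharging argument, carried out in the online setting so that the statements for choosability and ordinary coloring follow as corollaries. Let $G$ be a planar graph of girth at least $5$ with $\Delta(G)\le\Delta$ (the threshold $\Delta_0$ on $\Delta$ being fixed at the end) such that $G^2$ is not $(\Delta+2)$-paintable, and choose $G$ to minimize $|V(G)|+|E(G)|$. Because the girth condition rules out all cycles of length $3$ and $4$, one has the clean identity $d_{G^2}(v)=\sum_{u\in N(v)} d(u)$ for every vertex $v$, with no double-counting correction; this makes both the reducibility and the charge arithmetic concrete.

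The first step is to compile a list of reducible configurations that cannot appear in $G$. The simplest prototype is any vertex $v$ with $d_{G^2}(v)\le\Delta+1$, which is reducible because deleting $v$, painting $G-v$ by minimality, and then replacing $v$ leaves at least one free color. Finer reductions handle chains of degree-$2$ vertices and clusters of low-degree vertices whose joint square-neighborhood is tightly controlled; each such reduction is implemented, in the paintability setting, by exhibiting an online Hall/kernel-type matching argument on the bipartite graph between unpainted vertices and color tokens marked by the adversary, so the same reductions simultaneously yield paintability, choosability, and ordinary coloring.

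The second step is a discharging argument showing that, once $\Delta$ is large, every planar graph of girth at least $5$ with maximum degree at most $\Delta$ contains one of these reducible configurations. Using initial charges $\mu(v)=d(v)-4$ and $\mu(f)=\ell(f)-4$, Euler's formula yields $\sum\mu=-8$; under girth at least $5$ every face has charge at least $1$, every vertex of degree at least $4$ has nonnegative charge, and only vertices of degree $2$ or $3$ begin with negative charge. Discharging rules should push charge from faces and from high-degree vertices toward these degree-$2$ and degree-$3$ vertices; in the absence of any reducible configuration, each element must end with nonnegative charge, contradicting $\sum\mu=-8$.

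The main obstacle is that girth $5$ sits essentially at the tight side of the spectrum, as the girth-$6$ constructions of Borodin and collaborators already force $\Delta+2$ colors, so the argument has almost no slack. I therefore expect the reducible configurations to be considerably more elaborate than single-vertex deletions, involving joint reductions on clusters of small-degree vertices together with bookkeeping of second-neighborhood degree profiles, and I expect the discharging rules to be layered accordingly. A further complication, not present in the plain coloring version, is ensuring every reduction survives intact in the online paintability model, since the painter must act using only information already revealed; designing configurations that meet the discharging side and the online-reducibility side simultaneously is where I expect the main difficulty to lie, with the eventual threshold $\Delta_0$ arising from the arithmetic of those reductions.
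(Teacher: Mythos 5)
Your proposal is a plan rather than a proof. You set up the minimal counterexample correctly, note the useful girth-$5$ identity $d_{G^2}(v)=\sum_{u\in N(v)}d(u)$, choose standard charges $\mu(v)=d(v)-4$ and $\mu(f)=\ell(f)-4$, and correctly compute $\sum\mu=-8$. But the two ingredients that would turn this into a proof---the actual catalogue of reducible configurations beyond the trivial ``$d_{G^2}(v)\le\Delta+1$'' prototype, and the actual discharging rules that show an unavoidable configuration---are never specified. You acknowledge this explicitly (``I expect the reducible configurations to be considerably more elaborate\ldots'' and ``designing configurations \ldots is where I expect the main difficulty to lie''). So the entire technical heart of the argument is left open, and nothing in the proposal tells us the program can be completed at girth $5$, which, as you yourself observe, has essentially no slack.

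It is also worth knowing that the paper takes a genuinely different route and deliberately avoids discharging. Writing $k$ for $\Delta$, the paper calls a vertex \emph{big} if its degree is at least $\sqrt{k}$, and shows (via simple greedy-extension reductions) that in a minimal counterexample every edge has an endpoint in $N[B]$ and any two adjacent degree-$2$ vertices are both in $N(B)$. It then forms an auxiliary plane multigraph $G'$ by suppressing degree-$2$ vertices outside $N(B)$ and contracting edges from $S_1$ into $B$. A purely combinatorial lemma about plane maps (whose only planarity input is that a plane map has a vertex of degree at most $5$) guarantees a big vertex with at most $40$ distinct neighbors in the underlying simple map; by pigeonhole, some edge of $G'$ has enormous multiplicity, producing a long \emph{$r$-region} in $G$ bounded by two big vertices. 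Finally, a kernel-perfect orientation argument in the style of Bondy--Boppana--Siegel (as in Galvin's theorem) shows that $r$-regions with $r\ge 152$ are reducible, even in the paintability setting. Combining the lower bound $r\ge\sqrt{k}/10-13$ with the upper bound $r\le 151$ gives the threshold $\Delta_0=1{,}640^2+1$. The paper's region-decomposition approach sidesteps the delicate layered discharging you anticipate; your kernel-type reducibility instinct is right (the paper does use a kernel argument for the big configuration), but the unavoidability is established by a very different mechanism.
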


Our main result verifies Conjecture~\ref{conjecture2}, even for
list-coloring.  Further, in Section~\ref{paint}, we extend the result to
paintability, also called online list-coloring.  Using a version of Lemma~3.21
in~\cite{CCP}, we can also extend the result to correspondence coloring.


\begin{theorem}\label{th:main}
There exists $\Delta_0$ such that if $G$ is a planar graph with girth at least
five and $\Delta(G)\ge \Delta_0$, then $G$ is square ($\Delta(G)+2$)-choosable.
In particular, we can let $\Delta_0=1,730^2+1=2,992,901$.
\end{theorem}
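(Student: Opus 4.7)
The plan is to prove the theorem by contradiction via a minimum counterexample combined with a discharging argument. Let $G$ be a planar graph with girth at least $5$ and $\Delta(G)\ge\Delta_0$ which, together with a list assignment $L$ satisfying $|L(v)|=\Delta(G)+2$ for every $v$, admits no proper $L$-coloring of $G^2$; choose $G$ to be minimum with respect to $|V(G)|+|E(G)|$. A fundamental observation used throughout is that because $g(G)\ge 5$, we have $|N_{G^2}(v)|=\sum_{u\in N(v)}d(u)$ for every $v$: girth at least $5$ forbids triangles and $4$-cycles, so distance-$2$ vertices are disjoint from $N(v)$ and each arises from a unique neighbor. In particular, if some $v$ satisfies $\sum_{u\in N(v)}d(u)\le\Delta+1$, then by minimality $(G-v)^2$ is $L$-colorable and the coloring extends greedily to $v$, a contradiction. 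Hence every $2$-vertex has neighbor-degree sum at least $\Delta+2$, so at least one of its neighbors has degree close to $\Delta$; similar lower bounds apply to every small-degree vertex.

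Second, I would establish a structural theorem listing configurations absent from $G$. Typical reducible configurations include: a small-degree vertex whose neighbors all have small degree; a $5$-face containing too many small-degree vertices; and a high-degree vertex whose small-degree neighbors share enough structure (common neighbors on short faces, say) to permit a joint reduction. The standard strategy for each is to delete (or contract) a well-chosen subgraph $H$, invoke minimality on $G\setminus H$, and extend the $L$-coloring of $(G\setminus H)^2$ to $H$; since list sizes in $G^2$ are only $\Delta+2$, the extension typically requires a Hall-type or matching argument on the list assignment induced on $H$. Crucially, I would introduce a degree threshold $T=1640$: call a vertex \emph{heavy} if $d(v)\ge T$ and \emph{light} otherwise. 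The hypothesis $\Delta\ge T^2+1$ feeds into the reducibility analysis by giving heavy vertices a quadratic surplus over light ones, sharpening the key inequalities.

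Finally, I would execute discharging. Assign initial charges $\mu(v)=d(v)-4$ for vertices and $\mu(f)=d(f)-4$ for faces, so that $\sum\mu=-8$ by Euler's formula applied to a girth-$5$ planar graph. Design rules that send charge from heavy vertices and long faces to the light vertices and $5$-faces that need it, tuned to exactly those reducible configurations excluded from $G$ by the structural theorem. The claim is then that every vertex and face ends with non-negative charge, yielding the contradiction $-8=\sum\mu\ge 0$. The main obstacle, and where almost all of the technical work sits, will be the structural theorem: girth-$5$ planar graphs are much denser than girth-$6$ ones and admit many awkward local configurations, so pushing the argument down to $\Delta+2$ colors requires reducing subgraphs rather than isolated vertices. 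Balancing the charge per heavy-light incidence against the number of light neighbors a heavy vertex can have is what forces the lower bound on $\Delta$ to be quadratic in $T$, producing the explicit threshold $\Delta_0=1640^2+1$.
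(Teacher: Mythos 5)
Your proposal diverges sharply from the paper's actual argument and, more importantly, leaves the hard technical content as an unspecified plan. The paper emphasizes that it does \emph{not} use discharging; it uses only the fact that every plane map has a vertex of degree at most~$5$. You instead outline a generic discharging scheme ($\mu(v)=d(v)-4$, $\mu(f)=d(f)-4$) plus an unspecified list of reducible configurations and unspecified rules; there is no evidence that such a scheme closes, and historically this is precisely the sort of attack that failed on the girth-$5$ case for a decade while the girth-$6$ and girth-$7$ cases were settled. A proposal that says ``design rules\dots tuned to exactly those reducible configurations excluded from $G$'' without producing the configurations or the rules is not a proof, and the stated obstacle (``almost all of the technical work'') is exactly where the argument has to be.

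Two specific ideas you are missing are the heart of the real proof. First, the \emph{region decomposition}: one forms an auxiliary plane multigraph $G'$ from $G$ by suppressing degree-$2$ small vertices with no big neighbor and contracting the small-vertex/big-vertex edges incident to $S_1$; the lemmas about edges and degree-$2$ vertices ensure this is well-behaved. A purely topological argument (Lemma~\ref{prop:40}, which reduces to Euler's formula for plane maps via a sequence of edge additions, triangulations, and degree counts) then forces some big vertex $v$ to have at most $10$ big neighbors and at most $40$ neighbors in the underlying simple map $G''$. Since $d_{G'}(v)\ge\sqrt{k}$, the pigeonhole principle produces two big vertices bounding $\Omega(\sqrt{k})$ consecutive $2$-faces in $G'$, i.e.\ a long $r$-region. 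Nothing in your sketch identifies this global structure, and your per-vertex greedy/Hall reductions cannot find it. Second, the \emph{reducibility of a long region} is proved not by a Hall-type matching argument but by a kernel-perfect orientation in the style of Bondy--Boppana--Siegel and Galvin (Lemmas~\ref{lem:kernelperfect} and~\ref{lem:twocliques}): one carefully orients the two cliques $B_1',B_2'$ in $G^2$ so that a kernel exists in every induced subdigraph, with a delicate choice of the last three vertices in each clique to avoid short alternating paths. Plain Hall is too weak here because the two cliques interact through the edges of $G$ between $B_1$ and $B_2$, and the vertices near the boundary of the region (the sets $S_1,S_2$) have shorter lists. Without the region decomposition you have no reducible configuration that actually forces the bound, and without the kernel method you have no way to prove the one you would need is reducible. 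Your observation that $|N_{G^2}(v)|=\sum_{u\in N(v)}d(u)$ for girth $\ge 5$ is correct and appears implicitly in the paper's early lemmas, but it is nowhere near sufficient on its own.
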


The number of colors in Theorem~\ref{th:main} is optimal, as shown by the
family of graphs introduced in~\cite{BorodinGINT04} and depicted in
Figure~\ref{fig:girth5Delta1}. The vertex $u$ and its $p$ neighbors
together require $p+1$ colors; since $v$ is at distance $2$ from each of them,
in total we need $p+2$ distinct colors.

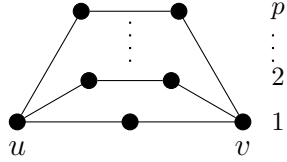
\begin{figure}[!h]
\center
\begin{tikzpicture}
    \tikzstyle{whitenode}=[draw,circle,fill=white,minimum size=9pt,inner sep=0pt]
    \tikzstyle{blacknode}=[draw,circle,fill=black,minimum size=6pt,inner sep=0pt]
\draw (0,0) node[blacknode] (a) [label=-90:$u$] {} 
-- ++(0:1.5cm) node[blacknode] (b) {}
-- ++(0:1.5cm) node[blacknode] (c) [label=-90:$v$] {};
\draw (a) -- ++(30:1.1cm) node[blacknode] (d) {};
\draw (a) -- ++(60:1.7cm) node[blacknode] (f) {};
\draw (c) -- ++(150:1.1cm) node[blacknode] (e) {};
\draw (c) -- ++(120:1.7cm) node[blacknode] (g) {};
\draw (d) edge  node {} (e); 
\draw (f) edge  node {} (g);
\draw[loosely dotted][thick] (1.5,0.8) edge  node {} (1.5,1.4);
\draw (3.2,0) node[right=1pt] (g) {\footnotesize $1$};
\draw (3.2,0.6) node[right=1pt] (g) {\footnotesize $2$};
\draw (3.2,1.4722) node[right=1pt] (g) {\footnotesize $p$};
\draw[loosely dotted][thick] (3.4,0.8) edge  node {} (3.4,1.3);
\end{tikzpicture}
\caption{A graph $G_p$ with girth 5, $\Delta(G_p) = p$, and $\chi^2(G_p)=\Delta(G_p)+2$.}
\label{fig:girth5Delta1}
\end{figure}

The girth assumption is tight as well, due to a construction directly inspired
from Shannon's triangle (see Figure~\ref{fig:girth4DeltaC}).  When coloring the
square, all $3p$ degree 2 vertices need distinct colors, since each pair has a
common neighbor.

\begin{figure}[!h]
\center
\begin{tikzpicture}[xscale=2.3,yscale=-2.3]
    \tikzstyle{whitenode}=[draw,circle,fill=white,minimum size=9pt,inner sep=0pt]
    \tikzstyle{blacknode}=[draw,circle,fill=black,minimum size=6pt,inner sep=0pt]
 \draw (0,0) node[blacknode] (a) [label=left:$u$] {}
 ++(0:1cm) node[blacknode] (b) [label=right:$v$] {}
 ++(120:1cm) node[blacknode] (c) [label=-90:$w$] {};


\draw (a) -- ++(-25:0.5517cm) node[blacknode] (ab) [label=-90:\small{$u_1$}] {};
\draw (a) -- ++(-50:0.7779cm) node[blacknode] (abn) [label=90:\small{$u_p$}] {};
\draw (b) edge node {}  (ab);
\draw (b) edge node {}  (abn);
\draw (b) -- ++(95:0.5517cm) node[blacknode] (bc) [label=150:\small{$v_1$}] {};
\draw (b) -- ++(70:0.7779cm) node[blacknode] (bcn) [label=-30:\small{$v_p$}] {};
\draw (c) edge node {}  (bc); 
\draw (c) edge node {}  (bcn);
\draw (a) -- ++(85:0.5517cm) node[blacknode] (ac) [label=30:\small{$w_1$}] {};
\draw (a) -- ++(110:0.7779cm) node[blacknode] (acn) [label=210:\small{$w_p$}] {};
\draw (c) edge node {}  (ac); 
\draw (c) edge node {}  (acn);
\draw[loosely dotted][thick] (ab) edge  node {} (abn);
\draw[loosely dotted][thick] (bc) edge  node {} (bcn); 
\draw[loosely dotted][thick] (ac) edge  node {} (acn);


\end{tikzpicture}
\caption{A graph $G_p$ with girth 4, $\Delta(G_p) = 2p$, and $\chi^2(G_p)=3p$.}
\label{fig:girth4DeltaC}
\end{figure}
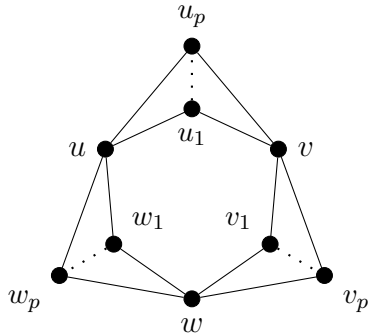

Theorem~\ref{th:main} is also optimal in another sense.
But before we can explain it, we must
introduce a more refined measure of a graph's sparsity: its maximum average
degree. The average degree of a graph $G$, denoted $\ad(G)$, is $\frac{\sum_{v
\in V}d(v)}{|V|}=\frac{2|E|}{|V|}$. The \emph{maximum average degree} of 
$G$, denoted $\mad(G)$, is the maximum of $\ad(H)$ over every subgraph $H$ of
$G$.  For planar graphs, Euler's formula links girth and maximum average degree.

\begin{lemma}[Folklore]\label{lem:euler}
For every planar graph $G$, $(\mad(G)-2)(g(G)-2)<4$.
\end{lemma}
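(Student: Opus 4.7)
The plan is to apply the standard Euler's-formula counting argument to the subgraph of $G$ realizing the maximum average degree. By definition of $\mad$, there exists a subgraph $H \subseteq G$ with $\ad(H) = \mad(G)$, and (by restricting to a densest connected component if needed) I may assume $H$ is connected. Since $H$ is a subgraph of the planar graph $G$, it is planar, and clearly $g(H) \ge g(G)$.

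Next I would split on whether $H$ contains a cycle. If $H$ is a tree (or a single vertex), then $\ad(H) < 2$, so $\mad(G) - 2 < 0$ and the inequality $(\mad(G)-2)(g(G)-2) < 4$ holds trivially whenever $g(G) \ge 3$ (and the lemma is vacuous if $G$ is a forest, under the convention $g(G) = \infty$). Otherwise, fix a planar embedding of $H$ and let $n = |V(H)|$, $m = |E(H)|$, $f = |F(H)|$, so Euler's formula gives $n - m + f = 2$. Each face is bounded by a closed walk of length at least $g(H) \ge g(G)$, and each edge lies on the boundary of at most two faces, so counting edge--face incidences yields
\[
 g(G)\, f \;\le\; 2m.
\]

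Plugging $f \le 2m/g(G)$ into Euler's formula gives
\[
 n \;\ge\; 2 + m - \frac{2m}{g(G)} \;=\; 2 + m\cdot\frac{g(G)-2}{g(G)},
\]
so $m \le \frac{g(G)}{g(G)-2}(n-2)$. Dividing by $n/2$ yields
\[
 \ad(H) \;=\; \frac{2m}{n} \;<\; \frac{2\,g(G)}{g(G)-2} \;=\; 2 + \frac{4}{g(G)-2},
\]
the strict inequality coming from the $-2$ in $(n-2)$. Rearranging gives $(\ad(H)-2)(g(G)-2) < 4$, and since $\mad(G) = \ad(H)$ this proves the lemma.

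There is no real obstacle here; the only mild subtlety is making sure the density-maximizing subgraph is handled correctly (one needs $g(H) \ge g(G)$ to apply Euler to $H$, and one needs to dispose of the acyclic case separately so that dividing by $g(G)-2$ is meaningful and $f \le 2m/g(G)$ is nontrivial).
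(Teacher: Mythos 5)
Your proof is correct and complete. Since the paper labels Lemma~\ref{lem:euler} ``Folklore'' and states it without proof, there is no paper argument to compare against; what you give is the standard Euler's-formula counting argument (restrict to a connected, densest subgraph $H$, bound face degrees below by $g(H)\ge g(G)$, use $\sum_F \ell(F)=2m$ and $n-m+f=2$). Your attention to the edge cases is appropriate: restricting to a connected component preserves the maximum of $\ad$ because $\ad(H)$ is a mediant of the component averages, and setting aside the acyclic case is exactly what keeps the face-length bound and the division by $g(G)-2$ legitimate.
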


Note that every planar graph $G$ with $g(G)\geq 7$ satisfies $\mad(G)<
\frac{14}5$. It was proved~\cite{blp14YEUJC} that
Conjecture~\ref{conjecture1} is true not only for planar graphs with $g \geq
7$, but also for all graphs with $\mad < \frac{14}5$ (in fact even for all
graphs with $\mad < 3- \epsilon$, for any fixed $\epsilon>0$). 
The theorem mentioned above of Borodin and Ivanova~\cite{BorodinI09girth6} for planar
graphs with girth at least 6 was strengthened~\cite{blp13DM} in the setting of
maximum average degree: every graph $G$ with $\mad(G) <3$ and $\Delta(G) \geq
17$ satisfies $\chi^2_\ell(G) \leq \Delta(G)+2$.
These results suggested that perhaps sparsity was
the single decisive characteristic when square list coloring planar graphs of high
girth.  However, as we show below, Theorem~\ref{th:main} cannot be strengthened
to require only $\mad(G)<\frac{10}3$ (rather than planar, with girth 5). 

Charpentier~\cite{c12} generalized the family of graphs shown in
Figure~\ref{fig:girth5Delta1} to obtain for each $C \in \mathbb{Z}^+$ a family
of graphs with maximum average degree less than $\frac{4C+2}{C+1}$, with
unbounded maximum degree, and whose squares have chromatic number $\Delta+C+1$.
For $C=2$ the construction, shown in Figure~\ref{fig:mad4i}, yields a family of
graphs with arbitrarily large maximum degree, maximum average degree less than
$\frac{10}3$, and whose squares are not $(\Delta+2)$-colorable.
In the square, all $p+4$ vertices $u, v_1, \ldots, v_p, w_1, w_2, x$
must receive distinct colors, since they are pairwise adjacent.
The maximum average degree of $G_p$ is reached on the graph $G_p$ itself. 
We can argue by induction that $\mad(G_p) < \frac{10}3$: note that $\mad(G_0)=\frac32<\frac{10}3$ and 
that $G_{p+1}$ is built from $G_p$ by adding precisely $3$ vertices and $5$ edges (if $\frac{2(a+5)}{b+3}\geq \frac{10}3$ then necessarily $\frac{2a}{b}\geq \frac{10}3$).


\begin{figure}[!h]
\center
\begin{tikzpicture}[scale=2.5,rotate=180]
\tikzstyle{blacknode}=[draw,circle,fill=black,minimum size=6pt,inner sep=0pt]
\draw (0,0) node[blacknode] (u) [label=right:$u$] {}
-- ++(-45:1cm) node[blacknode] (v1) [label=90:$v_1$] {}
-- ++(0:1cm) node[blacknode] (v1w1) {}
-- ++(0:1cm) node[blacknode] (w1) [label=90:$w_1$] {}
-- ++(45:1cm) node[blacknode] (x) [label=left:$x$] {};
 

\draw (u) 
-- ++(45:1cm) node[blacknode] (vp) [label=-90:$v_p$] {}
-- ++(0:1cm) node[blacknode] (vpwC) {}
-- ++(0:1cm) node[blacknode] (wC) [label=-90:$w_2$] {};
\draw (wC) edge  node {} (x);

\draw (u) -- ++(-20:0.75cm) node[blacknode] (v2) [label=90:$v_2$] {};
\draw (vp) -- ++(-38:0.7cm) node[blacknode] (vpw1) {};
\draw (vpw1) edge  node {} (w1);
\draw (wC) -- ++(180+28:0.7cm) node[blacknode] (v2wC) {};
\draw (v2wC) edge  node {} (v2);
\draw (v2) -- ++(-12:0.7cm) node[blacknode] (v2w1) {};
\draw (v2w1) edge  node {} (w1);
\draw (v1) -- ++(35:1.6cm) node[blacknode] (v1wC) {};
\draw (v1wC) edge  node {} (wC);

\draw[loosely dotted][very thick] (0.7,-0.1) edge  node {} (0.7,0.5);

\draw plot [smooth, tension=0.5] coordinates {(x) (2.7,1) (0.7,1) (u)};
\end{tikzpicture}
\caption{A graph $G_p$ with $\Delta(G_p) = p+1$, $\mad(G_p)=\frac{10p+6}{3p+4}< \frac{10}3$ and $\chi^2(G_p)=p+4$.
}
\label{fig:mad4i}
\end{figure}

\section{Definitions and notation}
\label{sec:defs}

Most of our definitions and notation are standard; for reference, though, we
collect them below.  Let $G$ be a multigraph with no loops.  The
\emph{neighborhood} of a vertex $v$ in $G$, denoted $N(v)$, is the set of neighbors
of $v$, i.e., $N(v)=\{u: uv\in E(G)\}$.  The \emph{closed neighborhood}, denoted
$N[v]$, is defined by $N[v] = N(v)\cup\{v\}$.  For a vertex set $S$, let $N(S) =
\cup_{v\in S}N(v)$ and $N[S] = \cup_{v\in S}N[v]$.
For a multigraph $G$ or digraph $D$ and a subset $S$ of its vertices, $G[S]$ or
$D[S]$ is the subgraph induced by $S$.  
The degree of a vertex $v$ in a multigraph $G$ is the number of incident edges.
The degree of a vertex $v$ is denoted $d_G(v)$, or $d(v)$ for short.
So, in particular, we may have $d_G(v) > |N_G(v)|$.  If $H$ is a subgraph of
$G$, then $d_H(v)$ is the number of edges of $H$ incident to $v$.  
For vertices $u$ and $v$ with $u\in N(v)$, the \emph{multiplicity} of the edge
$uv$ is the number of edges with $u$ and $v$ as their two endpoints.
The maximum and minimum degrees of $G$ 
are $\Delta(G)$ and $\delta(G)$, respectively; when $G$ is clear from context,
we may write simply $\Delta$ or $\delta$.  The \emph{girth} of $G$ is the
length of its shortest cycle.  

A multigraph $G$ is \emph{planar} if it can be drawn in the
plane with no crossings.  A \emph{plane map} is a planar embedding of a planar
multigraph such that each face has length at least 3.  
An \emph{$\ell$-face} (resp.~\emph{$\ell^+$-face}) is a face with boundary walk
of length equal to (resp.~at least) $\ell$.
The \emph{underlying map}
$G'$ of a plane embedding of a planar multigraph $G$ is formed from the
embedding of $G$ by suppressing all of its 2-faces (that is, repeatedly
identifying the two boundary edges of some remaining 2-face, until no 2-face
remains).  Suppose that $d_G(v)=2$ and that $N(v)=\{u,w\}$.  To
\emph{suppress $v$}, delete $v$ and add an edge $uw$.

\section{Proof of Main Theorem}\label{sec:proof}

Let $\Delta_0=1,730^2+1=2,992,901$ and let $k \geq \Delta_0$. To prove
Theorem~\ref{th:main},
it suffices to prove that every plane graph $G$ of girth at least five with
$\Delta(G) \leq k$ is square $(k+2)$-choosable.  Assume, for a contradiction,
that this does not hold, and consider a counterexample $G=(V,E)$ that
minimizes $\card{V}+\card{E}$.  We fix an embedding of $G$.
Let $L$ be a list assignment of $(k+2)$ colors to each vertex
of $G$ such that $G^2$ has no $L$-coloring. 
We reach a contradiction, by showing that $G$ must contain
some subgraph $H$ such that every $L$-coloring of $(G\setminus E(H))^2$ can be
extended to an $L$-coloring of $G^2$; such an $H$ is \emph{reducible}.  An
unusual feature of our proof is that we do not use the discharging method.
Instead, we use only the fact that every planar map has a vertex of degree at
most 5.

A vertex $v$ of $G$ is \emph{big} if $\deg(v)\ge \sqrt{k}$, 
and otherwise $v$ is \emph{small}.
The sets of big and small vertices of $G$ are denoted,
respectively, by $B$ and $S$.
Further, let $S_i = \{v\in S: |N_G(v) \cap B|=i\}$, i.e., small vertices
with exactly $i$ big neighbors. 
By Lemma 1.4, $\mad(G)<\frac{10}3$, so $|E(G)|<5|V(G)|/3$.  Thus, only a tiny
fraction of $V(G)$ can be big vertices.  Likewise, by planarity $\bigcup_{i\ge
3}S_i$ has size linear in the number of big vertices, again a very small
fraction of $|V(G)|$.  Hence, the vast majority of $V(G)$ is the subset
$\bigcup_{i=0}^2S_i$.  We show that $S_0$, the set of small vertices with only
small neighbors, induces an independent set.
Thus, we can decompose the planar embedding into regions, each defined by a
pair of big vertices.  We prove that any region with many vertices is
reducible.  To complete the proof, we show that some big vertex $v$ is adjacent
to few regions (here we use that every planar map has a vertex of degree at
most 5), so $v$ must be adjacent to a region with many vertices.

We begin with a few simple observations about $G$.

\begin{lem}\label{prop:connected}
Graph $G$ is connected and $\delta(G)\ge 2$.
\end{lem}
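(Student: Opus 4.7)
My plan is to prove both parts by the standard minimum-counterexample reduction: remove a small piece, invoke minimality of $G$ to $L$-color the square of the resulting graph, and then extend the coloring to the removed piece using that $|L(v)| = k+2$ is much larger than the number of vertices within distance $2$ of any low-degree $v$. Since this is the first reducibility lemma, I will silently strengthen ``fewest edges'' to ``fewest edges, with ties broken by fewest vertices,'' so that deleting either an edge or an isolated vertex yields a strictly smaller graph.

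For connectedness, suppose for contradiction that $G$ has components $G_1,\dots,G_t$ with $t\ge 2$. Each $G_i$ is a planar graph of girth at least five with $\Delta(G_i)\le \Delta(G)\le k$, and is strictly smaller than $G$ in the chosen measure. By minimality, each $G_i^2$ admits an $L$-coloring $\varphi_i$. Because vertices in distinct components of $G$ have no common neighbor and are non-adjacent, they are non-adjacent in $G^2$; hence $\varphi_1\cup\cdots\cup\varphi_t$ is a proper $L$-coloring of $G^2$, contradicting the choice of $G$.

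For the minimum degree bound, suppose $v\in V(G)$ has $d(v)\le 1$. Set $G':=G-v$. Then $G'$ is planar, still has girth at least five, and $\Delta(G')\le k$, and it is strictly smaller than $G$ (either one fewer edge, if $d(v)=1$, or one fewer vertex, if $d(v)=0$). By minimality, there is an $L$-coloring $\varphi$ of $(G')^2$. The vertices adjacent to $v$ in $G^2$ are precisely the vertices at distance at most two from $v$ in $G$ other than $v$ itself; there are at most
\[
d(v) + d(v)(\Delta-1) \le \Delta \le k
\]
such vertices. Thus at most $k$ colors of $L(v)$ are forbidden, and since $|L(v)|=k+2$ we may pick an allowed color for $v$ and extend $\varphi$ to an $L$-coloring of $G^2$, a contradiction.

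There is no real obstacle here; the only minor subtlety is ensuring that ``smaller than $G$'' is well-defined when $d(v)=0$, which is why I break ties in the minimality by vertex count. The rest is the routine degree count against the list size.
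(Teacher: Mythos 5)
Your proof is correct and takes essentially the same approach as the paper: for connectedness, each component is colorable by minimality and the colorings can be disjointly combined; for the degree bound, delete the low-degree vertex, color by minimality, and extend using the fact that the $G^2$-neighborhood of a degree-$\le 1$ vertex has size at most $\Delta \le k < k+2$. Your only departure is the explicit tiebreak (``fewest edges, then fewest vertices''), which cleanly covers the isolated-vertex case; the paper leaves this implicit, handling degree $0$ only indirectly via connectedness, and this is a reasonable tidying rather than a different argument.
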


\begin{proof}
If $G$ is not connected, then one of its components is a smaller
counterexample, contradicting the minimality of $G$. 

If $G$ contains a vertex $u$ of degree $1$, color $G \setminus \{u\}$ by
minimality, and extend the coloring to $G$ as follows. Vertex $u$ has
exactly one neighbor $v$, whose degree is at most $k$, by assumption on $G$.
So, $|N_{G^2}(u)| = |N(v)\setminus \{u\}|+|\{v\}| \leq k$. 
Since $|L(u)|\ge k+2$, some color $c$ in $L(u)$
is available to use on $u$.  Coloring $u$ with $c$ gives an $L$-coloring
of $G^2$, a contradiction.
\end{proof}

A key observation is the next lemma, which shows that in a minimal
counterexample at least one endpoint of every edge is either big or adjacent to
a big vertex.

\begin{lemma}\label{lem:bigedge}
For every edge $uv$ of $G$, either $u\in N[B]$ or $v\in N[B]$.
\end{lemma}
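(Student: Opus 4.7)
The plan is to argue by contradiction from the minimality of $G$. Suppose some edge $uv$ of $G$ has both endpoints in $S_0$, so that $d_G(u),d_G(v)<\sqrt{k}$ and every neighbor of $u$ or $v$ is also small. Let $G':=G-uv$; this graph is planar, has girth at least five, and has $\Delta(G')\le k$, so by minimality $(G')^2$ admits an $L$-coloring $\varphi$. I will modify $\varphi$ at $u$ and $v$ alone to produce a proper $L$-coloring of $G^2$, contradicting the choice of $G$.

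The key structural step is the claim
\[
E(G^2)\setminus E((G')^2)\;=\;\{uv\}\;\cup\;\{uy : y\in N_G(v)\setminus\{u\}\}\;\cup\;\{vz : z\in N_G(u)\setminus\{v\}\}.
\]
The containment $\supseteq$ uses $g(G)\ge 5$: the absence of a triangle through $uv$ forces $\mathrm{dist}_{G'}(u,v)\ge 3$, and for $y\in N_G(v)\setminus\{u\}$, any common neighbor $w\neq v$ of $u$ and $y$ in $G$ would yield a $4$-cycle $u w y v$, which is forbidden. The reverse containment is a short case analysis: if $\{x,y\}\subseteq V(G)\setminus\{u,v\}$ is at $G$-distance at most $2$, any common neighbor or edge witnessing this survives in $G'$, since only the edge $uv$ has been deleted. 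Hence every ``new'' $G^2$-constraint involves $u$ or $v$, and recoloring only these two vertices suffices.

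Finally, I count colors. Because $u\in S_0$, each $w\in N_G(u)$ satisfies $d_G(w)<\sqrt{k}$, so
\[
|N_{G^2}(u)|\;\le\;\sum_{w\in N_G(u)} d_G(w)\;<\;\sqrt{k}\cdot\sqrt{k}\;=\;k,
\]
and the same bound holds for $v$. Therefore at least $(k+2)-(k-2)=4$ colors in $L(u)$ are unused by $\varphi$ on $N_{G^2}(u)\setminus\{v\}$, and likewise for $v$. Choosing $\varphi(v)$ from its $\ge 4$ allowable colors and then $\varphi(u)\neq\varphi(v)$ from its $\ge 4$ allowable colors leaves at least three valid choices for $\varphi(u)$, yielding the desired $L$-coloring of $G^2$.

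The main obstacle is the structural identification of $E(G^2)\setminus E((G')^2)$; the girth hypothesis is used crucially to rule out ``hidden'' $G^2$-adjacencies among vertices other than $u$ and $v$ that might disappear upon deleting $uv$. Once this is in place, the color count is elementary arithmetic that trades the $S_0$ hypothesis directly for slack in the list sizes.
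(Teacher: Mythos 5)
Your proof is correct and follows essentially the same route as the paper: delete the edge $uv$, invoke minimality to obtain an $L$-coloring of $(G-uv)^2$, and recolor only $u$ and $v$ using the bound $|N_{G^2}(u)|,|N_{G^2}(v)|<k$ that comes from the $S_0$ hypothesis. The explicit identification of $E(G^2)\setminus E((G-uv)^2)$ and the accompanying girth argument are harmless but more than necessary --- the paper leaves this implicit because the only fact needed is the easy one, that every $G^2$-constraint broken by deleting $uv$ involves $u$ or $v$, and that observation does not require the girth hypothesis.
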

\begin{proof}
Assume, for a contradiction, that there is an edge $uv \in E(G)$ such that $u , v
\not\in N[B]$. In other words, $u$ and $v$ and all their neighbors have
degree at most $\sqrt{k}$. By the minimality of $G$, there exists an
$L$-coloring $\varphi$ of $(G-uv)^2$. Now we recolor both
$u$ and $v$ to obtain an $L$-coloring of $G^2$. Since $u$
has at most $\sqrt{k}$ neighbors in $G$, each of which has degree at most
$\sqrt{k}$, vertex $u$ has
at most $k$ neighbors in $G^2$. Thus, at most $k$ colors appear on $N_{G^2}(u)$.
Since $|L(u)|=k+2$, at least two colors remain available for $u$, counting its
own color. Similarly, $v$ has at least two available
colors. Thus, we may extend $\varphi$ to $u$ and $v$ to obtain a proper
$L$-coloring of $G^2$, a contradiction.
\end{proof}

The next lemma extends Lemma~\ref{lem:bigedge}, by showing that if both endpoints of
an edge have degree two then both endpoints are adjacent to big vertices.
 
\begin{lemma}\label{lem:degtwo}
If $u$ and $v$ are adjacent vertices of degree two, then $u\in N(B)$ and $v\in N(B)$.
\end{lemma}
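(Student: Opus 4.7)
The plan is to argue by contradiction. By Lemma~\ref{lem:bigedge} applied to $uv$, at least one of $u,v$ lies in $N[B]$, and since both are small this gives at least one of them in $N(B)$. So I would assume, without loss of generality, that $v\notin N(B)$ and derive a contradiction. Write $N(u)=\{v,w\}$ and $N(v)=\{u,w'\}$. The assumption makes both neighbors of $v$ small, i.e.\ $d(w')<\sqrt{k}$. Since $v$ is itself small, $v\notin N[B]$, so Lemma~\ref{lem:bigedge} on the edge $uv$ forces $u\in N[B]$, and since $u$ is small we get $u\in N(B)$. The only neighbors of $u$ are $v$ (small) and $w$, so $w$ must be big. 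The girth-$5$ hypothesis rules out each of the edges $vw$, $uw'$, $ww'$, since any of them would create a cycle of length at most four through $u,v$.

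Next I would use minimality: $G-uv$ has strictly fewer edges, so it admits an $L$-coloring $\varphi$ of its square. I claim $\varphi$ restricts to an $L$-coloring of $G^2-\{u,v\}$. Indeed, any path in $G$ of length at most two between two vertices of $V(G)\setminus\{u,v\}$ has interior in $\{u,v\}$ only if one of its endpoints lies in $\{u,v\}$, so no such path uses the edge $uv$; hence $G^2$ and $(G-uv)^2$ induce the same graph on $V(G)\setminus\{u,v\}$. It remains to extend $\varphi$ to $u$ and $v$.

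The final step is a color count. The girth constraints above yield
\[
N_{G^2}(v)=\{u,w,w'\}\cup(N_G(w')\setminus\{v\}),\qquad N_{G^2}(u)=\{v,w,w'\}\cup(N_G(w)\setminus\{u\}),
\]
with no further overlap, so $|N_{G^2}(v)|\le 2+d(w')<\sqrt{k}+2$ and $|N_{G^2}(u)|\le 2+d(w)\le k+2$. Therefore $|N_{G^2}(u)\setminus\{v\}|\le k+1$, so $u$ has at least $(k+2)-(k+1)=1$ available color in $L(u)$; and once $u$ has been colored, $v$ still has at least $(k+2)-(\sqrt{k}+2)=k-\sqrt{k}\ge 1$ available colors. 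Coloring $u$ first and then $v$ extends $\varphi$ to a proper $L$-coloring of $G^2$, contradicting the choice of $G$.

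The only mildly delicate point is the structural step that pins down the configuration: showing that $w$ must be big while $w'$ stays small, because this asymmetry is exactly what makes the trivial count $|N_{G^2}(u)|\le k+2$ enough to recolor $u$ using the two spare colors in $L(u)$, while the sparse side at $v$ provides plenty of room. No discharging or deeper structure is needed.
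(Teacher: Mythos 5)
Your proof is correct and uses essentially the same argument as the paper's: apply minimality, observe that $|N_{G^2}(u)\setminus\{v\}|\le k+1$ and $|N_{G^2}(v)|\le d(w')+2<\sqrt{k}+2$, and color $u$ then $v$. The only mechanical difference is that you delete the edge $uv$ and then uncolor $u,v$, whereas the paper deletes the vertices $u,v$ outright (making the ``restriction'' step unnecessary); your extra observations about $w$ being big and the girth-forced non-edges are correct but not actually used in the color count.
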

\begin{proof}
Suppose not. Let $u$ and $v$ be adjacent vertices of degree $2$ such that
$v\not\in N(B)$. Let $w$ be the neighbor of $v$ distinct from $u$. By the
minimality of $G$, there exists an $L$-coloring $\varphi$ of $(G \setminus
\{u,v\})^2$.  Since $u$ has at most $k+1$ neighbors in $G^2$ that are already
colored, we can extend $\varphi$ to $u$. Now $|N_{G^2}(v)| = |N(w) \setminus
\{v\}|+|\{w,u\}|+|N(u) \setminus \{v\}| \leq d(w)+2$. Since $w\not\in B$, $d(w)
< \sqrt{k}$, so we can extend $\varphi$ to $v$, which yields an
$L$-coloring of $G^2$, a contradiction. 
\end{proof}

The intuition behind much of the proof is that small vertices with only small
neighbors can always be colored last.  A key ingredient in formalizing
this intuition is a new plane multigraph. 
Let $G'$ denote the plane multigraph obtained from $G$ by first suppressing
vertices of degree $2$ in $S\setminus N(B)$ (defined at the end of
Section~\ref{sec:defs}) and then contracting each edge with one endpoint in
each of $S_1$ and $B$. Note that there is a natural bijection between the faces
of $G'$ and those of $G$.  We will use Lemmas~\ref{lem:bigedge}
and~\ref{lem:degtwo} to prove structural properties of $G'$. 

Since big vertices of $G$ are not identified with each other in the
construction of $G'$, we also let $B$ denote the vertices of $G'$ that contain
a big vertex of $G$. Let $S' = V(G')\setminus B$.
Note that neither suppressing nor contracting decreases the degree of a
vertex in $B$; thus, we conclude the following.

\begin{observation}\label{obs:bigdegree}
For every vertex $v$ in $B$, we have $d_{G'}(v)\geq d_G(v)$.
\end{observation}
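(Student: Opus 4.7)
The plan is to analyze separately the effect on $d(b)$ of the two operations used to form $G'$: (i) the suppression of all degree-$2$ vertices in $S\setminus N(B)$, and then (ii) the contraction of every edge between $S_1$ and $B$. For each, I want to argue that the degree of a fixed big vertex $b\in B$ cannot go down.

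For step (i), the key observation is that a vertex $w$ is suppressed only if $w\in S\setminus N(B)$, so neither of $w$'s two neighbors lies in $B$. In particular $w$ is not a neighbor of $b$, and suppression neither removes an edge incident to $b$ nor creates one. Hence the degree of $b$ is unchanged by step (i). I will also note, for use in step (ii), that suppression does not change the degree of any $S_1$-neighbor $u$ of $b$ either: $u$ itself is in $N(B)$ and so is not suppressed, and any edge $uw$ whose other endpoint $w$ is suppressed is simply replaced by an edge from $u$ to $w$'s other neighbor.

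For step (ii), let $u_1,\dots,u_m$ denote the $S_1$-neighbors of $b$ in $G$. Each $u_i$ has $b$ as its unique big neighbor, so the edges contracted which touch $b$ are exactly $bu_1,\dots,bu_m$. Contracting these one at a time and using the standard identity that contracting an edge $xy$ produces a vertex of degree $d(x)+d(y)-2$, the merged vertex (still called $b$ in $G'$) has degree
\[
d_G(b) + \sum_{i=1}^{m}\bigl(d_G(u_i)-2\bigr).
\]
By Lemma~\ref{prop:connected} we have $\delta(G)\ge 2$, so each summand is non-negative, giving $d_{G'}(b)\ge d_G(b)$.

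The proof is essentially a bookkeeping argument and no step should present a genuine obstacle. The one place to be careful is the degree count in (ii): one must use the degree of $u_i$ in the intermediate (post-suppression) graph, and the rerouting observation from step (i) guarantees this equals $d_G(u_i)$, so the inequality $d_G(u_i)\ge 2$ from Lemma~\ref{prop:connected} applies as written. Combining (i) and (ii) then yields the observation.
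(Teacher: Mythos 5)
Your proof is correct and takes the same approach the paper intends, just spelled out in detail: the paper disposes of this observation in one line (``neither suppressing nor contracting decreases the degree of a vertex in $B$''), while you make explicit that suppressed vertices lie outside $N(B)$ so cannot affect $b$, and that each contraction of an edge $bu_i$ changes $d(b)$ by $d(u_i)-2\ge 0$ via $\delta(G)\ge 2$. The bookkeeping, including the rerouting remark ensuring $d(u_i)$ is unchanged by suppression, is sound (girth $\ge 5$ rules out any parallel-edge or loop complications in the intermediate multigraphs).
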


Let $G''$ denote the underlying map of $G'$. We will next show that there is a big
vertex (in $G$ and $G'$) whose degree in $G''$ is small; in other words, $v$ has
many edges in $G'$ to the same neighbor.  But first we need the following general
lemma about plane maps with certain properties, the hypotheses of which (as we will 
show) are satisfied by $G''$.

\begin{lem}
\label{prop:40}
Let $H$ be a plane map and $A$, $C$, and $D$ be disjoint vertex sets such that
$V(H)=A\cup C \cup D$, every $v\in C$ satisfies $|N(v)\cap D|\ge 2$, and for
all $v_1,v_2\in C$ such that $|N(v_1)\cap D|=|N(v_2)\cap D|=2$, it holds that
$N(v_1)\cap D\ne N(v_2)\cap D$. If $A$ is an independent set and $d(v)\ge 3$
for all $v\in A$, then there exists $u\in D$ with 
$d_{H[D]}(u) \leq 10$ and $d_H(u)\le 40$.
\end{lem}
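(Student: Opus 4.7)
The plan is to combine a weighted averaging argument with sum-of-degree bounds on $D$: show that both $\sum_{u\in D} d_{H[D]}(u)$ and $\sum_{u\in D} d_H(u)$ are linear in $|D|$, and then extract a vertex satisfying both conclusions by a Markov-type argument.

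The first sum is immediate from planarity: since $H[D]$ is a plane subgraph of $H$ with all faces of length at least~$3$, Euler's formula gives $|E(H[D])|\le 3|D|-6$, and so $\sum_{u\in D} d_{H[D]}(u)\le 6|D|$. For the second sum, first I would control $|C|$ and $|A|$ linearly in $|D|$. Split $C=C_2\cup C_{\ge 3}$ according to whether $|N(v)\cap D|=2$ or $\ge 3$. By the lemma's hypothesis the map $v\mapsto N(v)\cap D$ is injective on $C_2$, so drawing each pair as an edge near the corresponding~$v$ produces a simple plane graph on $D$, giving $|C_2|\le 3|D|-6$. For $C_{\ge 3}$, a bipartite planar bound (min-degree $\ge 3$ on the $C_{\ge 3}$ side) yields $|C_{\ge 3}|\le 2|D|-4$, so $|C|\le 5|D|$. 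Analogously, the bipartite planar subgraph between $A$ and $C\cup D$ (with $A$-side min-degree $\ge 3$) gives $|A|\le 2(|C|+|D|)-4\le 12|D|$.

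Next I would build an auxiliary plane multigraph $D^*$ with vertex set $D$: keep every edge of $H[D]$; for each $v\in C$, add a path of $|N(v)\cap D|-1$ edges through $v$'s $D$-neighbors in the cyclic order around $v$; for each $v\in A$ with $|N(v)\cap D|\ge 2$, add the analogous path. Routing each added edge in a small neighborhood of its defining $v$ keeps $D^*$ plane. The central claim is that $D^*$ has no $2$-face, so $|E(D^*)|\le 3|D|-6$. A direct count of the edges of $D^*$ then yields
\[
|E(H[D])|+|E_H(C,D)|+|E_H(A,D)| \;\le\; 3|D|+|C|+|A| \;\le\; 20|D|,
\]
and therefore $\sum_{u\in D} d_H(u) = 2|E(H[D])|+|E_H(C,D)|+|E_H(A,D)| \le |E(H[D])|+20|D| \le 23|D|$.

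To finish, suppose for contradiction that every $u\in D$ has $d_{H[D]}(u)\ge 11$ or $d_H(u)\ge 41$, and consider the potential $\Phi(u)=27\,d_{H[D]}(u)+10\,d_H(u)$. In the first case $d_H(u)\ge d_{H[D]}(u)\ge 11$, so $\Phi(u)\ge 27\cdot 11+10\cdot 11=407$; in the second, $\Phi(u)\ge 10\cdot 41=410$. Summing gives $\sum_u\Phi(u)\ge 407|D|$, while the two bounds above yield $\sum_u\Phi(u)\le 27\cdot 6|D|+10\cdot 23|D|=392|D|$, a contradiction. The hard part will be justifying the no-$2$-face claim for $D^*$: the $C_2$ hypothesis rules out digons formed by two added $c$-edges sharing the same endpoints, but one still has to argue, via the embedding inherited from $H$, that no $2$-face is created by an added edge together with an edge of $H[D]$, or together with an added edge coming from $C_{\ge 3}$ or from $A$.
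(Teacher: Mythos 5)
Your averaging approach is a genuine alternative to the paper's proof (which passes to an edge-maximal configuration, shows it is essentially a triangulation with a layer of $C$-vertices and a layer of $A$-vertices, and applies Euler to a cleaned-up version of $H[D]$), but the key planarity bounds do not hold as stated, and you correctly flagged part of the problem. The issue is that $H$ is a plane \emph{multigraph}: a plane map only requires every face of $H$ to have length at least $3$, so $D$ may contain parallel edges. Hence the claim that $H[D]$ is ``a plane subgraph of $H$ with all faces of length at least~$3$'' is already false (deleting $A\cup C$ can create $2$-faces), and the no-$2$-face claim for $D^*$ fails as well. Concretely: take $D=\{u,w,z\}$ with edges $uz$, $wz$, and two parallel edges $e_1,e_2$ between $u$ and $w$; place a single $C$-vertex $c$ with $N(c)=\{u,w\}$ in the lens between $e_1$ and $e_2$. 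Every face of $H$ is a triangle and all hypotheses of the lemma hold, yet $H[D]$ has a $2$-face and $|E(H[D])|=4>3=3|D|-6$, while $D^*$ gains a third parallel $uw$ edge and has two $2$-faces, so $|E(D^*)|=5>3|D|-6$. Both Euler bounds you invoke can therefore fail.

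One can try to patch this by observing that each $2$-face of $H[D]$ (or of $D^*$) must enclose at least one vertex of $A\cup C$, giving $|E(H[D])|\le 3|D|+|A|+|C|-6$. But with your bounds $|C|\le 5|D|$ and $|A|\le 12|D|$ this only yields $|E(H[D])|\le 20|D|$, so $\sum_{u\in D}d_{H[D]}(u)$ degrades from $6|D|$ to about $40|D|$, and the potential $\Phi=27\,d_{H[D]}+10\,d_H$ no longer produces a contradiction---your margin of $407$ against $392$ has essentially no slack. There are further multigraph issues: the bipartite planar bounds for $|C_{\ge 3}|$, $|A|$, $|E_H(C,D)|$, and $|E_H(A,D)|$ require minimum degree at least $3$ to mean at least $3$ \emph{distinct} neighbours, which parallel edges from $A$ or $C$ into $D$ can break. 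Some of these configurations can be excluded ad hoc (a lens between parallel $a$--$u$ edges with $a\in A$ clashes with $A$ being independent or with the $C$-degree hypothesis), but carrying out those exclusions is essentially the content of the paper's extremal cleanup of $H$. As written, the argument does not close.
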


Before proving this lemma, we show how we apply it.

\begin{lemma}
\label{lem:smallbigdeg}
\label{cor:highmultiplicity}
There exists $v\in B$ with $d_{G''[B]}(v)\le 10$ and $d_{G''}(v)\le 40$. 
Further, there exists $u \in V(G')$ (recall that
$V(G'')=V(G')$) such that at least $\frac{\sqrt{k}}{40} - 1$ consecutive faces
of length two in $G'$ have boundary $(u,v)$.
\end{lemma}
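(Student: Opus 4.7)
The plan is to apply Lemma \ref{prop:40} to $H = G''$ with $D = B$, $A$ the set of vertices of $V(G'')$ originating from $S_0$ in $G$, and $C = V(G'') \setminus (A \cup D)$, i.e.\ the vertices of $V(G'')$ originating from $S_{\geq 2}$ in $G$. Because each $S_1$ vertex was contracted into its big neighbor and the only suppressed vertices come from $S_0$, this is a disjoint partition $V(G'') = A \cup C \cup D$. Observe that $S_0$ is independent in $G$: any $S_0$--$S_0$ edge would have both endpoints outside $N[B]$, contradicting Lemma \ref{lem:bigedge}.

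For the independence of $A$ in $G''$: any adjacency $xy$ in $G''$ lifts to an edge of $G'$, which originates either as the edge $xy$ in $G$ (contradicting $S_0$-independence) or from the suppression of a path $x w_1 \cdots w_k y$ through degree-2 vertices $w_i \in S_0$; but then $w_1 \in S_0$ is adjacent to $x \in S_0$, again a contradiction. For the minimum-degree condition: every $v \in A$ satisfies $d_G(v) \geq 3$ (not suppressed) with all $G$-neighbors in $S_{\geq 1}$ (by $S_0$-independence). By girth 5 the $S_1$ neighbors of $v$ contract into pairwise distinct big vertices in $G'$ (two merging to a common big $B$ would create the 4-cycle $vwBw'v$), and the $S_{\geq 2}$ neighbors persist as distinct vertices, giving $d_{G''}(v) \geq d_G(v) \geq 3$.

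Each $v \in C$ has $\geq 2$ big neighbors in $G$, which persist as distinct big neighbors in $G''$. The delicate injectivity condition to verify is: suppose distinct $v_1, v_2 \in C$ both satisfy $N_{G''}(v_i) \cap B = \{B_1, B_2\}$. Since big vertices are never identified, any vertex in $S_j$ with $j \geq 3$ has $\geq 3$ distinct big neighbors in $G''$, so each $v_i \in S_2$ with direct edges to $B_1, B_2$ in $G$. Moreover $v_i$ has no $S_1$ neighbor $w$: such $w$ would contract to a big vertex $B$, and $B \in \{B_1, B_2\}$ yields the triangle $v_i w B v_i$, while $B \notin \{B_1, B_2\}$ yields a third big neighbor in $G''$; a parallel short-cycle argument rules out extra big neighbors arising from suppressed $S_0$-paths out of $v_i$. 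Consequently all four edges $v_i B_j$ are direct in $G$, forming the 4-cycle $v_1 B_1 v_2 B_2 v_1$ forbidden by girth 5.

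Applying Lemma \ref{prop:40} now produces $v \in B$ with $d_{G''[B]}(v) \leq 10$ and $d_{G''}(v) \leq 40$; Observation \ref{obs:bigdegree} together with the bigness of $v$ gives $d_{G'}(v) \geq d_G(v) \geq \sqrt{k}$. Partition the $d_{G'}(v)$ cyclic edges at $v$ in $G'$ into maximal same-neighbor runs; each run retains at least one edge in $G''$, so the number of runs is at most $d_{G''}(v) \leq 40$. Since the total number of length-two faces incident to $v$ equals $d_{G'}(v) - d_{G''}(v) \geq \sqrt{k} - 40$, an averaging argument produces a single neighbor $u$ with at least $\sqrt{k}/40 - 1$ length-two faces having boundary $(u, v)$, consecutive within the cyclic order around $v$. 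The main obstacle is the injectivity check for $C$, requiring the girth-5 case analysis above to rule out both triangles from $S_1$ intermediaries and the 4-cycle arising from two $C$-vertices sharing a big-neighbor pair; a secondary subtlety is justifying that the maximal same-neighbor runs selected by the averaging step are in fact bounded by length-two faces between their consecutive edges.
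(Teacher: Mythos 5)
Your approach matches the paper's: apply Lemma~\ref{prop:40} to $G''$ with $D=B$, $A$ the surviving $S_0$-vertices, and $C$ the $S_{\ge 2}$-vertices; the independence and injectivity checks run on the same girth-5 arguments (you even supply some details the paper elides, such as explicitly verifying that vertices of $A$ keep degree at least $3$ in $G''$). The extra case analysis ruling out $S_1$-neighbors of $v_i$ is unnecessary---once you know $v_i\in S_2$ with $N_G(v_i)\cap B=\{B_1,B_2\}$, the $4$-cycle $v_1B_1v_2B_2$ already contradicts girth $5$---but it does no harm.

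The one genuine gap is in the final averaging step, and you flag it yourself without closing it. You partition the cyclic edges at $v$ into maximal \emph{same-neighbor} runs and average the $2$-faces over runs; but a same-neighbor run need not consist of consecutive $2$-faces, since a face between two parallel $vu$-edges can have other vertices inside it and thus have length greater than $2$. So the run with many $2$-faces you extract may have those $2$-faces broken into several non-contiguous blocks, and ``consecutive within the cyclic order around $v$'' does not follow. The correct partition is into maximal blocks of edges linked by consecutive $2$-faces (equivalently, the fibers of the map from $E_{G'}(v)$ to $E_{G''}(v)$). Forming $G''$ keeps exactly one edge per block, so the number of blocks equals $d_{G''}(v)\le 40$, and averaging $d_{G'}(v)\ge\sqrt k$ edges (or $d_{G'}(v)-d_{G''}(v)$ $2$-faces) over these at most $40$ blocks gives a block of size at least $\sqrt k/40$, hence at least $\sqrt k/40-1$ \emph{consecutive} $2$-faces with boundary $(u,v)$. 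This is what the paper's Pigeonhole parenthetical is doing; your run-based partition is the wrong one, and the property you say ``needs justifying'' is in fact false in general.
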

\begin{proof}
To prove the first statement, we apply Lemma~\ref{prop:40} to $G''$ with
$D=B$, $C=\bigcup_{i\ge 2} S_i$, and $A=\{v\in S_0: d(v)\ge 3\}$.
So we must show that this application satisfies the necessary hypotheses.
(Recall that when forming $G'$, we suppressed all $w\in S_0$ with $d(w)=2$ and
we contracted into $B$ all $w\in S_1$.)
Note that $A$ is an independent set, by Lemma~\ref{lem:bigedge}.
Now suppose there exist $v_1,v_2\in C$ such that $|N_{G''}(v_1)\cap
B|=|N_{G''}(v_2)\cap B|=2$ and $N_{G''}(v_1)\cap B=N_{G''}(v_2)\cap B$; say
$N_{G''}(v_1)\cap B = \{b_1,b_2\}$.  When $G''$ was formed from $G$, vertices
$v_1$ and $v_2$ may have gained neighbors in $B$, but they did not lose
neighbors.  Since $v_1,v_2\in \bigcup_{i\ge 2}S_i$, each already had 2 neighbors
in $B$ in $G$; these must be $b_1$ and $b_2$.  Hence, $G$ contains the 4-cycle
$b_1v_1b_2v_2$, contradicting the assumption that $G$ has girth at least 5.
Thus, we can apply Lemma~\ref{prop:40}, as desired; the guaranteed vertex $u$
is our desired vertex $v$, which proves the first statement of the lemma.

Now consider the second statement.
Since $d_{G}(v)\ge \sqrt{k}$, also $d_{G'}(v)\ge \sqrt{k}$.  Since
$d_{G''}(v)\le 40$, by Pigeonhole some edge $uv$ in $G'$ has multiplicity at
least $\frac{\sqrt{k}}{40}$; furthermore, these copies of the edge $uv$ are
embedded in $G'$ to create at least $\frac{\sqrt{k}}{40}-1$ consecutive
2-faces.  (It is possible that multiple copies
of $uv$, say $t$ copies, are embedded in $G'$, and thus in $G''$, such that
they do not create a 2-face.  However, now the $t$ copies of $uv$ contribute $t$
to $d_{G''}(v)$, so they do not impede this Pigeonhole argument.)
\end{proof}



Now we prove a slight strengthening of Lemma~\ref{prop:40}.

\begin{lem}
\label{prop:40:stronger}
Let $H$ be a multigraph embedded in the plane so that no face is a 2-face,
except for possibly the outer face.  
Let $w$ be some specified vertex on the outer face.
Let $A, C, D$ be disjoint vertex sets such that $w\in D$,
$V(H)=A\cup C\cup D$, every $v\in C$ satisfies $\card{N(v)\cap D}\ge 2$,
and for all $v_1,v_2\in C$ such that $\card{N(v_1)\cap D}=\card{N(v_2)\cap
D}=2$ we have $N(v_1)\cap D\ne N(v_2)\cap D$.  If $A$ is an independent set and
$d(v)\ge 3$ for all $v\in A$, then there exists $u\in D\setminus\{w\}$ such
that $d_{H[D]}(u)\le 10$ and $d_H(u)\le 40$.
\end{lem}

Before proving the lemma, we note that it implies Lemma~\ref{prop:40}, as follows.
Suppose that $H=(V,E,F)$ and $V(H)=A\cup C\cup D$, satisfy the hypothesis of
Lemma~\ref{prop:40}.  Choose an arbitrary vertex $w$ on the outer face; if it
is not in $D$, then move it to $D$.  Now the vertex $u$ guaranteed by
Lemma~\ref{prop:40:stronger} also satisfies the conclusion of Lemma~\ref{prop:40}.

\begin{proof}[Proof of Lemma~\ref{prop:40:stronger}.]
Suppose, to the contrary, that the lemma is false.  
Among all counterexamples, choose one, call it $H=(V,E,F)$ with
$V=A\cup C\cup D$ such that the following properties hold:
\begin{enumerate}
\item[(1)] $\card{D}$ is minimized; and, subject to that,
\item[(2)] the number of cut-vertices in $C\cup D$ is minimized; and, subject to
that,
\item[(3)] if possible, the outer face is a 2-face with a vertex of $A$ on its
boundary; and, subject to that,
\item[(4)] the number of parallel edges incident with vertices in $A$ is
minimized; and, subject to that,
\item[(5)] the number of edges incident to $D$ is maximized; and, subject to that,
\item[(6)] the number of edges incident to $A\cup C$ is maximized.
\end{enumerate}

We prove the lemma via a series of claims.

\begin{claim}
\label{clm0}
$H$ is connected, and no vertex of $C\cup D$ is a cut-vertex.
\end{claim}
\begin{proof}
If $H$ is disconnected, then each of its components contains a vertex of $D$, so
the component containing $w$ contradicts (1).
Thus, $H$ is connected.  Now suppose, to the contrary, that there exists $v\in
C\cup D$ that is a cut-vertex.  As a subclaim, we show that there is some
component $H_i$ of $H-v$, with $w\notin V(H_i)$, such that
$H[V(H_i)\cup\{v\}]$ has an embedding in which no face is a 2-face except for
possibly the outer face.  To see this, we consider two cases: (i) every
component of $H-v$ has a vertex on the outer face of $H$ and (ii) some component
does not.  In (i), we simply take some $H_i$ such that $w\notin V(H_i)$.  Now
$H[V(H_i)\cup\{v\}]$ has no 2-faces, except for possibly the outer face.  So we
are done.  In (ii), we take some component $H_i$ such that no other component
$H_j$ lies inside a face of $H[V(H_i)\cup\{v\}]$ in $H$.  That such a component
exists follows from the fact that every rooted tree has at least one leaf that
is not the root.  (We construct a rooted forest where every component of $H-v$
with a vertex on the outer face of $H$ is a root of its own tree, and a
component $H_j$ of $H-v$ is a child of a component $H_k$ if $H_j$ lies
inside a face of $H[V(H_k)\cup\{v\}]$.) This proves the subclaim.

Let $H'=H[V(H_i)\cup\{v\}]$.  Let $D'=(D\cap V(H'))\cup\{v\}$, $C'=(C\cap
V(H'))\setminus \{v\}$, $A'=A\cap V(H')$, and $w'=v$.  Note that $\card{D'}\le
\card{D}$, since $D'\setminus \{v\}\subseteq D$ and $w\in D\setminus D'$.
(Of course, if $v=w$, then $w\notin D\setminus D'$.  However, then we do not
move $v$ from $C'$ to $D'$, so again $|D'|\le |D|$.) Further, $v$ is a
cut-vertex of $H$, but not of $H'$.  So,
by (1) and (2) in our choice of $H$, the lemma holds for $H'$, with $w'=v$;
that is, there exists $u\in D'\setminus\{v\}$ with the desired properties.
This proves the lemma for $H$, since
$d_{H[D]}(u)=d_{H'[D]}(u)$ and $d_{H}(u)=d_{H'}(u)$.  
%
%
\end{proof}

\begin{claim}
Every $3^+$-face $f$ contains a vertex of $A$ on its boundary.
\label{clm1}
\end{claim}
\begin{proof}
Suppose, to the contrary, there exists a $3^+$-face $f$ with no vertex of $A$
on its boundary.  Now we can add a new vertex, $v$, to $A$ and make $v$ adjacent
to every vertex on $f$.  This contradicts (5) or (6) in our choice of $H$.
\end{proof}
\vspace{-.1in}

\begin{claim}
Every $3^+$-face $f$ is a 3-face, and the boundary of every $3$-face 
contains exactly one vertex of $A$.
For every vertex $u\in V(H)$ and every pair of vertices $v_1,v_2$ such that
$uv_1$ and $uv_2$ appear
consecutively in the cyclic order of edges incident to $u$, there exists an edge
$v_1v_2$ such that edges $v_1u$, $uv_2$, $v_2v_1$ induce a face of length $3$
(possibly with additional parallel edges). 
\label{clm2}
\end{claim}
\begin{proof}
Since $v_1u$ and $uv_2$ are consecutive around $u$, they lie on a $3^+$-face
$f$.  Suppose, to the contrary, that some face $f$ has length at least 4.  
By Claim~\ref{clm1}, $f$ contains a vertex $v\in A$ on its boundary.  Since
$d(v)\ge 3$, set $A$ is independent, and $D\cup C$ contains no cut-vertex, the
vertices that immediately precede and succeed $v$ on the boundary of $f$ are
distinct and are not in $A$; call them $v_1$ and $v_2$. 
Since $f$ has length at least 4, we can add the edge $v_1v_2$ in the interior of
$f$, while maintaining planarity (and without creating a 2-face).  This
contradicts (5) or (6) in our choice of $H$.  Thus, $f$ has length 3.  By
Claim~\ref{clm1}, $V(f)$ contains at least one vertex in $A$.  Since $A$ is an
independent set, $V(f)$ contains exactly one vertex of $A$.  Finally,
by definition $v_1,u,v_2$ appear along the boundary of some face, $f$.  That $f$
is a 3-face follows from the first statement.
\end{proof}
\vspace{-.1in}

\begin{claim}
The outer face is a 2-face with a vertex of $A$ on its boundary.
\label{clm3}
\end{claim}
\begin{proof}
Suppose, to the contrary, this is false.  Let $f$ denote the outer face.
By Claim~\ref{clm2}, either $f$ is a 2-face with no vertex of $A$ on its
boundary, or
$f$ is a 3-face with a vertex of $A$ on its boundary.  Suppose the former, and
let $w,v$ be the boundary vertices of $f$ (where $w$ is specified in the
hypotheses of the lemma).  Add a new vertex $a\in A$, add edge $av$
and two copies of edge $aw$, so that the outer 2-face is bounded by $a$ and $w$.
This contradicts (3) in our choice of $H$.  Instead, assume the latter, and let
$a,v,w$ be the boundary vertices of $f$.  Now add a second copy of $aw$, so that
the new outer face is a 2-face, bounded by $a$ and $w$.  Again, this contradicts
(3) in our choice of $H$.  This proves the claim.
\end{proof}

\begin{claim}
Every vertex in $C$ has at most 3 neighbors in $D$. 
\label{clm4}
\end{claim}
\begin{proof}
Suppose, to the contrary, that some vertex $u\in C$ has neighbors
$v_1,v_2,\ldots, v_p$ ($p\ge
4$) in $D$.  We add an edge $v_1v_3$ and replace $u$ by two new vertices $u_1$
and $u_2$ in $C$, where $u_1$ is adjacent to $v_1, v_2, v_3$ and $u_2$ to
$v_3,\ldots,v_p,v_1$.  
Further, $u_1$ inherits all neighbors of $u$ in $A\cup C$ that appear between
$v_1$ and $v_2$ or between $v_2$ and $v_3$ in the cyclic order of neighbors of
$u$.  Likewise, $u_2$ inherits all other neighbors of $u$ in $A\cup C$.
This contradicts (4) or (5) in our choice of $H$.  
\end{proof}
\vspace{-.1in}

\begin{claim}
No vertex in $A$ has incident parallel edges, with a single exception: on
the outer 2-face, with boundary vertices $a^*\in A$ and $w\in D$, vertex $a^*$
has exactly two edges to $w$. 
Further, $3\le d(a^*)\le 4$, and every vertex in $A\setminus
\{a^*\}$ has degree $3$ and at least one neighbor in $C$.  Finally, if
$d(a^*)=4$, then $a^*$ has a neighbor in $C$.
\label{clm5}
\end{claim}

\begin{proof}
Let $A^*=A\setminus\{a^*\}$.
Assume there is $u\in A^*$ and $v\in C\cup D$ such that $H$ has parallel edges
between $u$ and $v$; choose such a pair $u,v$ and two such edges that bound a
face $f$ in $H[\{u,v\}]$ (other than the outer face), so as to minimize the
number of vertices embedded inside $f$.  By Claim~\ref{clm0}, vertex $v$ is not
a cut-vertex, so $u$ must have a neighbor inside $f$.  Let $V'$ be the set of
vertices that lie inside $f$ (including $u$ and $v$).  Let $H'=H[V']$, let
$A'=A\cap V'$, let $C'=(C\cap V')\setminus\{v\}$, let $D'=(D\cap V')\cup\{v\}$,
and let $w'=v$.  Now $H'$ contradicts (4) in our choice of $H$. 
If $a^*$ has more than 2 edges to $w$ or has parallel edges to a vertex other
than $w$, then nearly the same argument gives a contradiction, using $a^*$ in
place of $u$.  This proves the first statement.

Suppose, to the contrary, there is a vertex $u$ in $A^*$ of degree at least $4$,
and denote its consecutive neighbors in a cyclic order by $v_1,\ldots,v_p$
($p \geq 4$).  We add an edge $v_1v_3$ and replace $u$ by two new vertices
$u_1$ and $u_2$ in $A$, where $u_1$ is adjacent to $v_1, v_2, v_3$ and $u_2$ to
$v_3,\ldots,v_p,v_1$.   This contradicts (5) or (6) in our choice of $H$.
If $d(a^*)\ge 5$, then we do something similar, as follows.  Let
$v_1,\ldots,v_p$ denote the
neighbors of $a^*$ in cyclic order, with $v_1=w$.  Remove $a^*$ and add three
new vertices $a^*_1,a^*_2,a^*_3$.  Let $a^*_1$ inherit from $a^*$ edges to
$v_1,v_2,v_3$.  Let $a^*_2$ inherit from $a^*$ edges to $v_3,\ldots,v_p,v_1$.
Lastly, let $a^*_3$ have two edges to $v_1=w$ (bounding the outer face), and
also an edge to $v_3$.  This contradicts (5) or (6) in our choice of $H$.

Finally, suppose that
$u\in A^*$ and all 3 of its neighbors are in $D$. Now we can move $u$ to $C$,
and add a new vertex $u'$ to $A$ in one of the triangular faces $f$ incident to
$u$, making $u'$ adjacent to every vertex on $f$.  This contradicts (5)
in our choice of $H$.  Thus, each vertex in $A^*$ has a neighbor in $C$.  
Nearly the same modification works if $d(a^*)=4$ and all neighbors of $a^*$ are
in $D$.  However, now we must ensure that (3) holds for the modified graph,
since (3) holds for $H$.  So we put $u'$ into the outer 2-face of $H$ and join
it with $a^*$ and (by two edges) with $w$.  This creates a new outer 2-face
bounded by $u'\in A$ and $w$.  
\end{proof}

\begin{claim}
Every face contains a vertex of $D$, and $C$ is an independent set. 
\label{clm6}
\end{claim}
\begin{proof}
Assume, for a contradiction, that there are faces
containing only vertices of $A \cup C$, and take $f$ to be one adjacent to a
face $f'$ containing a vertex of $D$. By hypothesis, $w\in D$ and $w$ is on
the outer face.  So any face sharing an edge with the outer face has a vertex
of $D$ on its boundary.  Since every non-outer face in $H$
is a 3-face, this yields a $4$-cycle $(u,v_1,v_2,v_3)$ where $u \in D$,
$v_1,v_2,v_3 \in A \cup C$ and each of $(v_3,u,v_1)$ and $(v_1,v_2,v_3)$
induces a face. By Claim~2, either $v_1\in A$ or $v_3\in A$.
By symmetry, assume that $v_1 \in A$. 

Since $A$
induces an independent set, $v_2, v_3 \in C$.  Consider the other face
$f''=(v_2,v_3,v_4)$ incident to $v_2v_3$.  Note that $v_4\ne v_1$, since
$v_1\in A$, so $d(v_1)\le4$.  Also, $v_4\ne u$, since $v_4\in A$ and $u\in D$.
Let $u'$ denote the third neighbor of $v_4$.
Now we delete the three edges $v_1v_3$, $v_2v_3$, and $v_2v_4$,
and add the two edges $uv_4$ and $u'v_1$. This contradicts (5) in our choice of
$H$, unless $u'=u$, so in the modified graph $u$ is a cut-vertex.
However, in this case we proceed as in the proof of Claim~1.  Since $v_2$ and
$v_3$ each have a neighbor in $D\setminus\{u\}$, the graph $H'$ satisfies
$\card{D'}<\card{D}$, which contradicts (1) in our choice of $H$.  So, we can
assume that $u\ne u'$.  
One other possible exception is that $v_4=a^*$ and $u'=w$, and one of the edges
from $a^*$ to $w$ stops us from adding the edge $u'v_1$.  Now we simply delete
$v_3a^*$ and add $v_2w$, contradicting (5) in our choice of $H$.
Consequently, every face contains a vertex of $D$. 

If there is an edge $v_1v_2$ between two vertices in $C$, then the two incident
3-faces must each contain a vertex in $D$, respectively $u_1$ and $u_2$. Now we
delete edge $v_1v_2$ and add an edge $u_1u_2$, which contradicts (5) in our
choice of $H$.  Thus, $C$ is an independent set.
\end{proof}

\begin{claim}
Let $H'$ denote $H[C\cup D]$, the plane multigraph induced by the vertices of
$C$ and $D$, with the plane embedding inherited from $H$.  
Every face of $H'$ is a 3-face, except possibly the outer face, and
$d_{H'}(u)=\frac{d_H(u)}2$ for every $u\in (C\cup D)\setminus \{w\}$.
\label{clm7}
\end{claim}

\begin{proof}
Let $a^*$ denote the vertex of $A$ on the outer 2-face, and let
$A^*=A\setminus\{a^*\}$.
Let $f$ be an arbitrary non-outer face of $H'$.  Since every non-outer face of
$H$ contains a vertex of $A$, face $f$ does not exist in $H$; so $f$ was formed
by deleting one or more vertices of $A$.  Since $A$ is an independent set in
$H$ and every non-outer face of $H$ is a 3-face, $f$ contained exactly one
vertex of $A$.  Since $d_H(a)=3$ for all
$a\in A^*$, face $f$ has length 3, and the vertices of $A$ and $C\cup D$
alternate in the neighborhood of every vertex $u\in (C\cup D)\setminus w$ in
$H$.  This implies that $d_{H'}(u)=\frac{d_H(u)}2$.
\end{proof}

\begin{claim}
Let $H''$ denote $H[D]$, the plane multigraph induced in $H$ by the vertices of
$D$. Form $H'''$ from $H''$ by repeatedly deleting one edge in a 2-face until
the resulting graph contains no 2-faces.  For every vertex $u\in D$ that is not
on the outer 2-face of $H'$ (if the outer face of $H'$ is a 2-face), we have
$d_{H''}(u)=\frac{d_{H'}(u)}2$ and $d_{H'''}(u)\ge \frac{d_{H''}(u)}2$. Further,
there exists vertex $u\in D\setminus\{w\}$ (also not on the outer face of $H'$
if it is a 2-face) with $d_{H'''}(u)\le 5$.  Thus, $d_D(u)=d_{H''}(u) \leq
10$ and $d_H(u) \leq 8 d_{H'''}(u) \leq 40$. 
\label{clm8}
\end{claim}

\begin{proof}
By Claim~\ref{clm7}, every face of $H'$ is a 3-face, except possibly the outer
face.  Further, each such 3-face has a vertex of $C$ on its boundary, since no
vertex in $A$ has all three neighbors in $D$, by Claim~\ref{clm5}.  Since $C$ is
an independent set (by Claim~\ref{clm6}), there is in fact a bijection between
$C$ and the faces of $H''$ (excluding the outer face of $H''$
if the outer face of $H'$ is a 2-face bounded by two vertices in $D$).
So, for every vertex $u \in D$ not on an outer 2-face of $H'$, we have
$d_{H''}(u)=\frac{d_{H'}(u)}{2}$. By hypothesis on $H$, no two vertices of $C$,
each with exactly two neighbors in $D$, have the same two neighbors in $D$.
Thus, no two faces of length $2$ in $H''$ share an edge (except that one can
possibly share an edge with the outer 2-face, if it exists). 
Therefore, by deleting exactly one edge in every face of length $2$, we obtain
a plane multigraph $H'''$ with no 2-face except possibly the outer face, and
such that for every vertex $u \in D$ (not on an outer 2-face of $H'$) we have
$d_{H'''}(u)\geq \frac{d_{H''}(u)}{2}$. 
Since every non-outer face of $H'''$ is a $3^+$-face,
Euler's formula implies that $\card{E(H''')}\le 3\card{D}-5$.
Thus, there is some $u\in D$ that is not on an outer 2-face of $H'$ such that
$d_{H'''}(u) \leq 5$. As a consequence, $d_D(u)=d_{H''}(u) \leq 10$ and
$d_H(u) \leq 8 d_{H'''}(u) \leq 40$. 
\end{proof}

Claim~\ref{clm8} completes the proof of the lemma.
\end{proof}

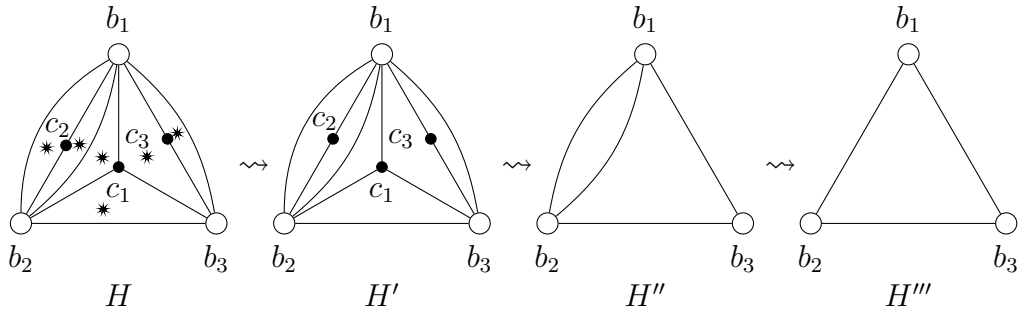
\begin{figure}[!h]
\center
\begin{tikzpicture}[scale=1.0]
    \tikzstyle{whitenode}=[draw,ellipse,fill=white,minimum size=8pt,inner sep=0pt]
    \tikzstyle{blacknode}=[draw,circle,fill=black,minimum size=4pt,inner sep=0pt]
     \tikzstyle{A}=[draw,fill=black,star,star point ratio=3.8,star points=9,minimum size=3pt,inner sep=0pt]

\draw (0,0) node[blacknode] (c1) [label=-90:$c_1$] {};
\draw (c1) ++(90:1.5cm) node[whitenode] (b1) [label=90:$b_1$] {};
\draw (c1) ++(-150:1.5cm) node[whitenode] (b2) [label=-90:$b_2$] {};
\draw (c1) ++(-30:1.5cm) node[whitenode] (b3) [label=-90:$b_3$] {};

\draw (c1) ++(-110:0.6cm) node[A] (a2) {};
\draw (c1) ++(20:0.4cm) node[A] (a3) {};
\draw (c1) ++(30:0.9cm) node[A] (a3b) {};
\draw (c1) ++(150:0.25cm) node[A] (a1) {};
\draw (c1) ++(150:0.6cm) node[A] (a1b) {};
\draw (c1) ++(165:0.99cm) node[A] (a1c) {};

\draw (b2) --++(60:1.199cm) node[blacknode] (c2) {};
\draw (c2) ++(115:0.26cm) node {$c_2$};
\draw (b3) --++(120:1.299cm) node[blacknode] (c3) [label=180:$c_3$] {};
\draw (c2) -- (b1);
\draw (c3) -- (b1);

\draw[bend left=20] (b1) edge node {} (b2);
\draw[bend left=30] (b2) edge node {} (b1);
\draw[bend left=20] (b1) edge node {} (b3);
\draw (b2) -- (b3);
\draw (c1) -- (b1);
\draw (c1) -- (b2);
\draw (c1) -- (b3);

\draw (c1) ++(-90:1.7cm) node {$H$};
\draw (c1) ++(0:1.8cm) node {$\leadsto$};

\draw (3.5,0) node[blacknode] (c1) [label=-90:$c_1$] {};
\draw (c1) ++(90:1.5cm) node[whitenode] (b1) [label=90:$b_1$] {};
\draw (c1) ++(-150:1.5cm) node[whitenode] (b2) [label=-90:$b_2$] {};
\draw (c1) ++(-30:1.5cm) node[whitenode] (b3) [label=-90:$b_3$] {};

\draw (b2) --++(60:1.299cm) node[blacknode] (c2) {};
\draw (c2) ++(115:0.26cm) node {$c_2$};
\draw (b3) --++(120:1.299cm) node[blacknode] (c3) [label=180:$c_3$] {};
\draw (c2) -- (b1);
\draw (c3) -- (b1);

\draw[bend left=20] (b1) edge node {} (b2);
\draw[bend left=30] (b2) edge node {} (b1);
\draw[bend left=20] (b1) edge node {} (b3);
\draw (b2) -- (b3);
\draw (c1) -- (b1);
\draw (c1) -- (b2);
\draw (c1) -- (b3);

\draw (c1) ++(-90:1.7cm) node {$H'$};
\draw (c1) ++(0:1.8cm) node {$\leadsto$};

\draw (7,0) node (c1) {};
\draw (c1) ++(90:1.5cm) node[whitenode] (b1) [label=90:$b_1$] {};
\draw (c1) ++(-150:1.5cm) node[whitenode] (b2) [label=-90:$b_2$] {};
\draw (c1) ++(-30:1.5cm) node[whitenode] (b3) [label=-90:$b_3$] {};

\draw[bend left=20] (b1) edge node {} (b2);
\draw[bend left=20] (b2) edge node {} (b1);
\draw[bend left=0] (b1) edge node {} (b3);
\draw (b2) -- (b3);

\draw (c1) ++(-90:1.7cm) node {$H''$};
\draw (c1) ++(0:1.8cm) node {$\leadsto$};

\draw (10.5,0) node (c1) {};
\draw (c1) ++(90:1.5cm) node[whitenode] (b1) [label=90:$b_1$] {};
\draw (c1) ++(-150:1.5cm) node[whitenode] (b2) [label=-90:$b_2$] {};
\draw (c1) ++(-30:1.5cm) node[whitenode] (b3) [label=-90:$b_3$] {};

\draw (b2) -- (b3);
\draw (b2) -- (b1);
\draw (b1) -- (b3);

\draw (c1) ++(-90:1.7cm) node {$H'''$};

\end{tikzpicture}
\caption[]{The evolution, in the proof of Lemma~\ref{prop:40:stronger}, from
$H$ to $H'''$ of a subgraph of $H$.  Here $b_1, b_2, b_3 \in D$, $c_1, c_2, c_3
\in C$. Each \!\!\!
{
\begin{tikzpicture}
\draw[white] (0,0) -- (0,.001);
\tikzstyle{A}=[draw,fill=black,star,star point ratio=3.8,star points=9,minimum size=3pt,inner sep=0pt]
\draw (0,.10) node[A] () {};
\end{tikzpicture}
}
\!\!\! represents a vertex in $A$ adjacent to all three
vertices incident to the face. (Black vertices have all incident edges drawn, but white
vertices may have more incident edges.)}
\label{fig:prop:40:stronger}
\end{figure}

Recall that $S'=V(G')\setminus B$.

\begin{lemma}
\label{lem:multipleedges}
No vertex in $S'$ is incident to $3$ or more consecutive faces of length $2$ in $G'$.
\end{lemma}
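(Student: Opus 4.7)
Plan. Suppose for contradiction that some $u\in S'$ is incident to three or more consecutive $2$-faces in $G'$. Because every $2$-face is bounded by a digon of parallel edges, this gives four consecutive parallel edges $e_1,e_2,e_3,e_4$ from $u$ to a common vertex $v$ in $G'$. Each such edge arises from either (i) a direct edge of $G$ from $u$ to a vertex in the preimage of $v$, or (ii) a length-$2$ path $u-w'-x$ in $G$ with $w'\in S\setminus N(B)$ a suppressed degree-$2$ vertex. I first rule out $v\in S'$: in that case the preimage of $v$ is $\{v\}$, and by $g(G)\ge 5$, two suppression paths $u-w'_1-v$, $u-w'_2-v$ create the $4$-cycle $uw'_1vw'_2$, while a direct edge $uv$ together with a suppression path creates the triangle $uvw'$. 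So at most one parallel edge is possible between two vertices of $S'$, contradicting four.

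Hence $v\in B$, with preimage $\{b\}\cup S_1(b)$ for some big $b$. A suppression path $u-w'-b$ is impossible, since then $w'\in N(B)$. If $u$ is directly adjacent to $b$ in $G$, any further contribution creates a triangle (with a direct $uw_i$ edge) or a $4$-cycle (with a suppression path $u-w'-w_i$ to a neighbor of $b$), so only one parallel edge arises -- contradiction. Likewise two direct edges $uw_i, uw_j$ with distinct $w_i,w_j\in S_1(b)$ form the $4$-cycle $uw_ibw_j$, so at most one such direct edge exists. Thus at least three of the four parallel edges correspond to distinct suppression paths $u-w'_j-w_{i_j}$ with distinct degree-$2$ vertices $w'_j\in S\setminus N(B)$ and distinct $w_{i_j}\in S_1(b)$.

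For the reducibility step, choose a suppressed vertex $w'_2$ whose parallel edge is strictly interior to the consecutive string (so flanked by $2$-faces on both sides). Delete $w'_2$ from $G$; by the minimality of $G$, the graph $(G-w'_2)^2$ admits an $L$-coloring $\varphi$. Since $w'_2$ has degree $2$ with neighborhood $\{u,w_{i_2}\}$, the only edge of $G^2-w'_2$ missing from $(G-w'_2)^2$ is $\{u,w_{i_2}\}$ (the unique distance-$2$ pair in $G$ passing through $w'_2$). If $\varphi(u)\ne \varphi(w_{i_2})$, then $\varphi$ is already a valid $L$-coloring of $G^2-w'_2$, and I color $w'_2$ greedily, using that $|N_{G^2}(w'_2)|\le d_G(u)+d_G(w_{i_2})<2\sqrt k<k+2$.

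The main obstacle is when $\varphi(u)=\varphi(w_{i_2})$: one must recolor $u$ or $w_{i_2}$ before coloring $w'_2$. When $u\in S_0$ every neighbor of $u$ is small of degree $<\sqrt k$, so $|N_{G^2}(u)|<k$ and at least two colors in $L(u)$ are free, at least one of which differs from $\varphi(w_{i_2})$. The delicate case is $u\in S_{\ge 2}$: $u$ then has big neighbors contributing $\Theta(k)$ vertices to $N_{G^2}(u)$, and symmetrically $N_{G^2}(w_{i_2})$ contains all of $N(b)$, so greedy recoloring of either vertex is not guaranteed. The intended resolution exploits the redundancy afforded by the multiple parallel paths: the set $\{u,w_{i_1},w_{i_2},w_{i_3},w_{i_4}\}$ is a clique in $G^2$, so $\varphi(u)$ can coincide with at most one of the $\varphi(w_{i_j})$'s; a pigeonhole over the choice of which interior $w'_j$ to delete, combined with a careful swap using the remaining three paths, should produce an $L$-coloring avoiding the conflict, at which point the previous extension argument completes the proof. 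Making this last step precise is the crux of the proof.
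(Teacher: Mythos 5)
Your first two paragraphs (the case $v\in S'$, and the observation that when $v\in B$ most of the parallel edges must come from suppression paths through distinct degree-$2$ vertices $w'_j$ with distinct second endpoints $w_{i_j}\in S_1\cap N(b)$) are correct and track the paper's setup closely. But from there you take a coloring/reducibility detour that you yourself acknowledge you cannot close, while the paper finishes with a short purely structural argument that you miss.

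The paper's finish: having four vertices $w_1,\dots,w_4\in T_1\cup U_1$ that are \emph{consecutive in the cyclic neighborhood of $u$}, and knowing $|N(u)\cap T_1|\le 1$ by girth, at least one of the two \emph{middle} vertices $w_2,w_3$ lies in $U_1$; say $w_2\in U_1$ with second endpoint $x_2\in T_1\subseteq N(v)$. Girth $\ge 5$ forbids $x_2$ from having any neighbor within distance two of $v$ (other than through $w_2$), and because $w_2$ sits \emph{between} $w_1$ and $w_3$ in the rotation at $u$, the two $2$-faces flanking the $uw_2$-edge in $G'$ correspond to faces of $G$ that \emph{enclose} $x_2$ and leave it no room for any neighbor beyond $w_2$ and $v$. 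Thus $d(x_2)=2$, so $w_2$ and $x_2$ are adjacent degree-$2$ vertices, yet $w_2\notin N(B)$ (both its neighbors $u$ and $x_2$ are small), directly contradicting Lemma~\ref{lem:degtwo}. No deletion, no recoloring, no minimality of $G$ is invoked in this step.

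Your proposed alternative -- delete a suppressed $w'_2$, color $(G-w'_2)^2$ by minimality, then patch up -- has exactly the gap you flag. When $u\in\bigcup_{i\ge 2}S_i$, both $u$ and $w_{i_2}$ can have nearly $k$ forbidden colors from their big neighbors, so greedy recoloring of either is not available. Your suggestion to ``pigeonhole over the choice of which interior $w'_j$ to delete'' does not obviously help: the coloring $\varphi$ you obtain by minimality \emph{depends} on which vertex you delete, so you cannot fix one $\varphi$ and then shop among the $w'_j$; and for any single deletion the conflict $\varphi(u)=\varphi(w_{i_2})$ can genuinely occur with neither endpoint recolorable. Moreover you have not used the ``consecutive in the rotation'' structure at all, which is precisely what makes the paper's planarity argument bite. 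The statement is a structural fact about the minimal counterexample, and the intended proof is structural; the reducibility machinery belongs to Lemma~\ref{lem:smallregion}, not here.
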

\begin{proof}
Assume, for a contradiction, that there is an edge $uv$ in $G'$,
with $u \in S'$, such that at least 3 consecutive faces have boundary $(u,v)$. 
First consider the case where $v \in S'$. In the
construction of $G'$ from $G$, an edge is added between $u$ and $v$ only when
there is a vertex of degree $2$ adjacent to both $u$ and $v$ that is suppressed.
Hence, regardless of whether $uv$ belongs to $E(G)$ or is formed from the
suppression of a vertex of degree $2$ adjacent to $u$ and $v$,
there exists a cycle in $G$ of length at most $4$, contradicting that $G$
has girth at least five.

So we may assume that $v \in B$. Let $T_1=S_1\cap N(v)$, that is the set of
small neighbors of $v$ with exactly one big neighbor (which must be $v$). In
the construction of $G'$, an edge is added between $u$ and $v$ only if either
there is a neighbor of $u$ in $T_1$, or if there is a vertex of degree $2$
adjacent to both $u$ and to a vertex in $T_1$. Let $U_1$ denote the set of
vertices of degree $2$ that are adjacent to $u$ and also to a vertex in $T_1$. 
Note that each copy of $uv$ in $G'$ corresponds to a path of length at most 3 in
$G$ with all vertices in $\{u,v\}\cup U_1\cup T_1$.  Thus, $uv\notin E(G)$,
since this would create a 3-cycle or 4-cycle in $G$, contradicting that $G$ has
girth at least 5.

Since $uv$ has multiplicity at least four, and the copies of $uv$ form 3
consecutive faces of length 2 in $G'$, we know $4\le |U_1|+|N(u) \cap T_1|$; 
further, there exist four vertices, $w_1,\ldots,w_4$, in $T_1 \cup U_1$ that
are consecutive in the cyclic neighborhood of $u$ in $G$. Since $G$ has girth
at least five, $|N(u) \cap N(v)|\leq 1$, so $|N(u) \cap T_1| \leq 1$.  Thus, at
least one of $w_2$ and $w_3$ is in $U_1$; by symmetry, assume $w_2\in U_1$. 
Let $x_2$ be the neighbor of $w_2$ in $G$ distinct from $u$. Note that $x_2 \in
T_1$. Since $G$ has girth at least five, $x_2$ is neither adjacent to a
neighbor of $v$ nor to a neighbor of a neighbor of $v$. Regardless of whether
$w_3$ belongs to $U_1$ or $T_1$, it follows by planarity that $w_2$ and $v$ are
the only neighbors of $x_2$ in $G$. Therefore $d(x_2)=2$, which is a
contradiction to Lemma~\ref{lem:degtwo}, since $w_2 \not\in N(B)$.
\end{proof}

Now we use Lemma~\ref{lem:multipleedges} to strengthen the final conclusion of
Lemma~\ref{lem:smallbigdeg}.
Recall that $G''$ is formed from $G'$ by suppressing 2-faces.

\begin{corollary}
\label{cor:highmultipB}
There exist $u,v \in B \cap V(G')$ such that there are at least 
$\frac{\sqrt{k}}{10}-13$ consecutive faces of length two with boundary $(u,v)$
in $G'$.
\end{corollary}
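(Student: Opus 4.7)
The plan is to sharpen the Pigeonhole step in Lemma~\ref{lem:smallbigdeg} using the new restriction supplied by Lemma~\ref{lem:multipleedges}. First invoke Lemma~\ref{lem:smallbigdeg} to fix $v\in B$ with $d_{G''[B]}(v)\le 10$ and $d_{G''}(v)\le 40$; by Observation~\ref{obs:bigdegree} and the definition of big, $d_{G'}(v)\ge d_G(v)\ge \sqrt{k}$. The earlier pigeonhole only used the $40$ distinct neighbors in $G''$ and so produced a run of length at least $\sqrt{k}/40$. To improve, I would separate those neighbors into big and small and show that almost all of the parallel copies at $v$ in $G'$ must lie on the big side.

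Concretely, I decompose the cyclic sequence of edges incident to $v$ in $G'$ into maximal runs of consecutive copies of a single edge. A run of length $r$ creates exactly $r-1$ consecutive $2$-faces in $G'$ and contributes a single edge to $G''$, so the runs are in bijection with the edges of $G''$ incident to $v$. Hence there are exactly $d_{G''}(v)\le 40$ runs in total; of these, exactly $d_{G''[B]}(v)\le 10$ go to big partners, and the remaining $d_{G''}(v)-d_{G''[B]}(v)$ go to small partners. By Lemma~\ref{lem:multipleedges}, any run to a small partner has length at most $3$, for otherwise that small partner would be incident to three consecutive $2$-faces in $G'$. Therefore the runs to small partners account for at most $3(d_{G''}(v)-d_{G''[B]}(v))\le 120$ of the edges at $v$.

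Subtracting, at least $\sqrt{k}-120$ edges incident to $v$ lie in runs to big partners, and these are distributed among at most $d_{G''[B]}(v)\le 10$ runs. Pigeonhole then furnishes a single run of length at least $(\sqrt{k}-120)/10=\sqrt{k}/10-12$; this run consists of parallel copies of one edge $uv$ with both $u,v\in B\cap V(G')$, and it creates at least $\sqrt{k}/10-13$ consecutive $2$-faces with boundary $(u,v)$, as desired. Since $k\ge \Delta_0>1640^2$, the guaranteed run length $\sqrt{k}/10-12$ easily exceeds $3$, so Lemma~\ref{lem:multipleedges} is indeed consistent with the long run having a big partner.

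There is no serious combinatorial obstacle here; all the heavy lifting has been done in Lemmas~\ref{lem:smallbigdeg} and~\ref{lem:multipleedges}. The one delicate point is the bookkeeping: one must verify that maximal runs correspond bijectively to the edges of $G''$ incident to $v$, so that the "number of runs" is exactly $d_{G''}(v)$ rather than merely bounded by it, and that the big/small split of edges of $G''$ at $v$ genuinely partitions the runs in the way the pigeonhole argument requires.
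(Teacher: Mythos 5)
Your proposal is correct and follows essentially the same route as the paper: start from the vertex $v$ furnished by Lemma~\ref{lem:smallbigdeg}, use Lemma~\ref{lem:multipleedges} to cap the contribution of small $G''$-neighbors at $3\cdot 40=120$ edges, and then Pigeonhole the remaining at least $\sqrt{k}-120$ edges over the at most $d_{G''[B]}(v)\le 10$ big-partner runs. Your bookkeeping in terms of maximal runs (rather than neighbors) and the explicit bijection between runs at $v$ and edges of $G''$ at $v$ is a slightly cleaner way to phrase exactly what the paper already does, including the caveat it flags parenthetically about nonconsecutive parallel copies.
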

\begin{proof}
Let $v$ be as in Lemma~\ref{lem:smallbigdeg}.  By
Lemma~\ref{lem:multipleedges}, each edge from $v$ to a small vertex in $G''$
accounts for at most 3 consecutive edges in $G'$ incident to $v$.  Thus, the
$|N_{G''}(v)\cap B|$ big neighbors of $v$ account for the remaining at least
$d_G(v)-3(40)$ edges.  Since $v$ is big, $d_{G'}(v)\ge d_G(v)\ge \sqrt{k}$. 
By Pigeonhole, some edge from $v$ to
a big vertex, say $u$, accounts for at least $(\sqrt{k}-120)/d_{G''[B]}(v)$
consecutive edges incident to $v$ in $G'$; and the number of consecutive
faces with boundary $(u,v)$ is one less.  Since $d_{G''[B]}(v)\le 10$,
some big neighbor $u$ shares with $v$ at least $\frac{\sqrt{k}}{10}-13$
consecutive faces of length 2.
\end{proof}



If $b_1,b_2 \in B\cap V(G')$ are such that at least $r$ consecutive faces
$f'_1,\ldots,f'_r$ of $G'$ have boundary $(b_1,b_2)$, then these faces 
are an \emph{$r$-region} $R'$ of $G'$; see Figure~\ref{fig:regions}.
Analogously, an \emph{$r$-region} $R$ of $G$ is a set of faces which contract
to an $r$-region $R'$ in $G'$.
We define $V(R)$ as $(\bigcup_{i=1}^rV(f_i))\setminus\{b_1,b_2\}$.


\begin{figure}[!h]
\center
\begin{tikzpicture}[scale=1.2]
    \tikzstyle{whitenode}=[draw,ellipse,fill=white,minimum size=8pt,inner sep=0pt]
    \tikzstyle{blacknode}=[draw,circle,fill=black,minimum size=4pt,inner sep=0pt]
\draw (0,-0.5) node[whitenode] (b1) [label=0:$b_1$] {\small{$\sqrt{k}^+$}} 
 ++(-90:3.5cm) node[whitenode] (b2) [label=-0:$b_2$] {\small{$\sqrt{k}^+$}}
 ++(-90:1cm) node {$G$};

\draw (-1,-1.5) node[whitenode] (b11) {};
\draw (-0.3,-1.5) node[blacknode] (b12) {};
\draw (0.4,-1.5) node[blacknode] (b13) {};
\draw (1,-1.5) node[blacknode] (b14) {};
\draw (1.5,-1.5) node[whitenode] (b15) {};

\draw (-1,-3) node[whitenode] (b21) {};
\draw (-0.2,-3) node[blacknode] (b22) {};
\draw (0.6,-3) node[blacknode] (b23) {};
\draw (1.3,-3) node[whitenode] (b24) {};

\draw (b11) --++ (-90:0.75cm) node[blacknode] (d1) {};
\draw (b12) --++ (-90:0.75cm) node[blacknode] (d2) {};

\draw (d1) edge  node {} (b21);
\draw (d2) edge  node {} (b22);
\draw (b1) edge node {} (b15);

\foreach \i in {1,...,4}
{
\draw (b1) edge node {} (b1\i);
\draw (b2) edge node {} (b2\i);
\pgfmathtruncatemacro{\j}{\i+1}; 
\draw (b2\i) edge node {} (b1\j);
}

\draw (-0.65,-1.7) node {$f_1$};
\draw (-0.6,-3) node {$f_2$};
\draw (-0.05,-1.8) node {$f_3$};
\draw (0.35,-2.25) node {$f_4$};
\draw (1.05,-2.25) node {$f_5$};

\draw[thick,densely dotted] (-1.3,-1.2) rectangle node {} (1.8,-1.8);
\draw (2.24,-1.5) node {$B_1$};

\draw[thick,densely dotted] (-1.2,-2.05) rectangle node {} (-0.1,-2.45);
\draw (-1.6,-2.25) node {$D_2$};

\draw[thick,densely dotted] (-1.3,-2.7) rectangle node {} (1.6,-3.3);
\draw (2.04,-3) node {$B_2$};

\draw (3,-2.25) node {$\leadsto$};

\draw (5,-0.5) node[whitenode] (b1) [label=0:$b_1$] {\small{$\sqrt{k}^+$}} 
 ++(-90:3.5cm) node[whitenode] (b2) [label=-0:$b_2$] {\small{$\sqrt{k}^+$}}
  ++(-90:1cm) node {$G'$};

\draw[bend left=70] (b1) edge node {} (b2);
\draw[bend left=40] (b1) edge node {} (b2);
\draw[bend left=15] (b1) edge node {} (b2);
\draw[bend left=-15] (b1) edge node {} (b2);
\draw[bend left=-40] (b1) edge node {} (b2);
\draw[bend left=-70] (b1) edge node {} (b2);

\draw (4,-2.25) node {$f_1$};
\draw (4.5,-2.25) node {$f_2$};
\draw (5,-2.25) node {$f_3$};
\draw (5.5,-2.25) node {$f_4$};
\draw (6,-2.25) node {$f_5$};


\end{tikzpicture}
\caption{A $5$-region in $G$ and the corresponding $5$-region in $G'$.}
\label{fig:regions}
\end{figure}
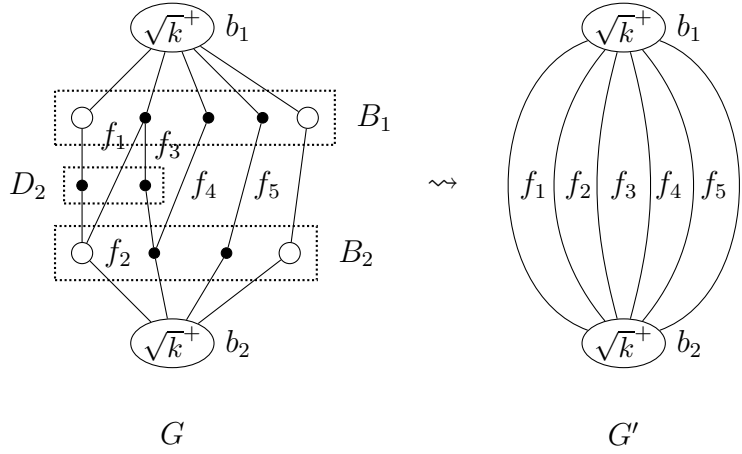

\begin{observation}
If $R$ is an $r$-region of $G$, then $V(R)=B_1\cup B_2 \cup D_2$, where
$B_1$, $B_2$, and $D_2$ are disjoint vertex sets such that
$B_1\subseteq N(b_1)$ and $B_2\subseteq N(b_2)$ for some $b_1,b_2\in B$;
further, $D_2$ is an independent set of degree two vertices, each of which has
one neighbor in $B_1$ and the other neighbor in $B_2$.
\label{rregionObs}
\end{observation}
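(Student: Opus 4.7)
The plan is to unpack the construction of $G'$ from $G$ and classify the vertices on the boundary of each face $f_i$ of $R$. Each $f_i$ corresponds to a 2-face of $G'$ with boundary $(b_1,b_2)$, so every vertex of $G$ on the boundary of $f_i$ other than $b_1,b_2$ must either be suppressed or contracted during the construction of $G'$; otherwise such a vertex would survive into $V(G')$ and appear on the boundary of the corresponding face in $G'$, forcing that face to have length greater than two. By the definition of the two operations, such a boundary vertex is either an $S_1$-vertex (contracted into its unique big neighbor) or a degree-2 vertex in $S\setminus N(B)$ (suppressed). Moreover, for any $S_1$-vertex $u$ on the boundary, its big neighbor must lie in $\{b_1,b_2\}$, since otherwise the image of $u$ after contraction would add a third big vertex to the boundary of the 2-face in $G'$.

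Accordingly I would define $B_j = V(R)\cap N(b_j)$ for $j\in\{1,2\}$ and $D_2 = V(R)\setminus N(B)$, which by the classification above yields $V(R)=B_1\cup B_2\cup D_2$. For disjointness: $B_1\cap B_2=\emptyset$ because any common neighbor of $b_1$ and $b_2$ lies in some $S_i$ with $i\ge 2$ and so is neither suppressed nor contracted, contradicting the previous paragraph; and $D_2\cap B_j=\emptyset$ by the definition of $D_2$. For each $d\in D_2$, being suppressed forces $d$ to have degree exactly two, and its two neighbors lie on the boundary of $f_i$ and hence in $V(R)=B_1\cup B_2\cup D_2$. By Lemma~\ref{lem:degtwo}, no two adjacent degree-2 vertices both lie outside $N(B)$, so neither neighbor of $d$ lies in $D_2$. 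If both neighbors lay in the same $B_j$, then $d$, $b_j$, and the two $B_j$-neighbors would form a 4-cycle, contradicting $g(G)\ge 5$. Independence of $D_2$ follows from the same use of Lemma~\ref{lem:degtwo}.

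The main point requiring care is the face-correspondence argument: verifying cleanly that any vertex of $G$ which is neither suppressed nor contracted appears on the boundary of the corresponding face of $G'$. Once this is in place, the rest reduces to a small case analysis using only Lemma~\ref{lem:degtwo} and the girth hypothesis.
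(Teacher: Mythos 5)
Your proof is correct and follows essentially the same strategy as the paper's: trace the construction of $G'$ backwards, invoking Lemma~\ref{lem:degtwo} to control the suppressed degree-two vertices and the girth-5 hypothesis to pin down adjacencies. The only real difference is organizational: the paper reasons edge-by-edge, observing that each copy of $b_1b_2$ in $G'$ corresponds to a path of length 3 or 4 in $G$ (which makes the claim that each $D_2$ vertex has one neighbor in $B_1$ and one in $B_2$ fall out immediately from the path structure, without a separate girth argument), whereas you classify the vertices of $V(R)$ directly and then have to rule out a $D_2$ vertex having both neighbors in the same $B_j$ via a 4-cycle. Both treatments rely implicitly on the face correspondence between $G$ and $G'$ that the paper states as a remark; your explicit acknowledgement that this step deserves care is the honest thing to flag, and the paper handles it no more formally than you do.
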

\begin{proof}
Let $R$ be an $r$-region of $G$.  By definition, there exist $b_1,b_2\in B\cap
V(G')$ such that the $r$-region $R'$ consists of at least $r$
consecutive faces in $G'$, each with boundary $(b_1,b_2)$.
Recall that $G'$ is formed from $G$ by suppressing the degree 2 vertices in $S\setminus
N(B)$ and contracting each edge joining $S_1$ and $B$.  By Lemma~\ref{lem:degtwo}, these
suppressed degree 2 vertices form an independent set.  Thus, each copy of
$b_1b_2$ in $R'$ in $G'$ corresponds to a path of length 1, 3, or 4 joining
$b_1$ and $b_2$ in $G$.  Each such path $P$ (of length at least 3) must contain
a vertex from each of $B_1$ and $B_2$.  If $P$ contains another vertex $w$,
then $w$ must be suppressed in forming $G'$, so $w$ must be a degree 2 vertex
with a neighbor in each of $B_1$ and $B_2$.  This proves the observation.
\end{proof}

Hereafter, we use $B_1$, $B_2$, $D_2$, $b_1$, $b_2$, and $V(R)$ as defined in
the previous observation.

\begin{lemma}
\label{lem:smalldegreeinregions}
If $R$ is an $r$-region of $G$, then $B_1$ and $B_2$ are independent sets,
and each $v\in B_1\cup B_2$ satisfies $|N(v)\cap V(R)|\le 3$.
\end{lemma}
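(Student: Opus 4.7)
The plan is to separate the independence assertion from the degree bound. For independence, any pair $u, u' \in B_1$ both lie in $N(b_1)$, so an edge $uu'$ would yield the triangle $u b_1 u'$, contradicting the girth hypothesis. Symmetrically, $B_2$ is independent.

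For the degree bound, fix $v \in B_1$ (the case $v \in B_2$ is symmetric). Since $V(R) = B_1 \cup B_2 \cup D_2$ and $B_1$ is independent, $N(v) \cap V(R) \subseteq B_2 \cup D_2$. First, if $v$ had distinct $y, y' \in B_2 \cap N(v)$, then $vyb_2y'v$ would be a 4-cycle, contradicting girth; so $|N(v) \cap B_2| \le 1$. Next I claim $|N(v) \cap D_2| \le 2$. Each $z \in D_2 \cap N(v)$ has, by Observation~\ref{rregionObs}, a unique second neighbor $y_z \in B_2$, and distinct $z, z'$ give distinct $y_z \ne y_{z'}$ (else the 4-cycle $vzy_zz'v$). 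Each such $z$ corresponds to a length-4 region path $b_1 v z y_z b_2$ through $v$. Because all these paths share the edge $b_1v$, they produce a block of consecutive parallel $b_1b_2$ edges at $v$'s position in $b_1$'s cyclic order in $G'$, which forces the edges $vz_i$ to appear consecutively in the cyclic order at $v$ in $G$.

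Suppose for contradiction there are three such paths $P_1, P_2, P_3$ with middle vertex $y_2 \in B_2$. Since $y_2$ is adjacent to $z_2 \in D_2 \subseteq S \setminus N(B)$, a degree-2 vertex not in $N(B)$, Lemma~\ref{lem:degtwo} forces $d(y_2) \ge 3$, so $y_2$ has an extra neighbor $w$. A short case check (using girth and the degree-2 constraint on each $z_i$) rules out $w \in \{v, b_1, b_2, z_i, y_i\}$: each choice either produces a 3- or 4-cycle through $b_1$ or $b_2$, or promotes some $z_i$ to degree 3, or gives $y_2$ two big neighbors and violates $y_2 \in S_1$. So $w$ is a vertex outside $V(R) \cup \{b_1, b_2\}$. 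Now when $G'$ is formed, contracting $y_2$ into $b_2$ sends $y_2w$ to a new edge $b_2w$ placed at $y_2$'s former position in the cyclic order at $b_2$; this position sits strictly between the parallel copies of $b_1b_2$ arising from $P_2$ and one of $P_1, P_3$. Hence $b_2 w$ splits one of the two 2-faces of $R'$ flanking the $P_2$-edge, contradicting the fact that $R$ is an $r$-region. Thus $|N(v) \cap D_2| \le 2$, and combined with the $B_2$ bound, $|N(v) \cap V(R)| \le 3$.

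The main obstacle is the splitting argument in the last step: one must carefully track cyclic orders through successive suppressions of $D_2$-vertices and contractions of $S_1$-to-$B$ edges to confirm that $b_2 w$ really lands between the relevant pair of parallel $b_1b_2$ edges in $G'$, rather than outside the region block. This is bookkeeping rather than insight, but it is where the planar embedding hypothesis bites; the rest of the argument is the short-cycle case analysis already sketched.
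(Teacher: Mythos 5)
Your overall strategy matches the paper's: use girth for independence of $B_1,B_2$ and for $|N(v)\cap B_2|\le 1$, then handle $|N(v)\cap D_2|\ge 3$ via planarity of the region together with Lemma~\ref{lem:degtwo}. You even take the same contrapositive pair (one side uses planarity, the other Lemma~\ref{lem:degtwo}); the paper deduces from planarity that the $B_2$-endpoint has degree two and then invokes Lemma~\ref{lem:degtwo}, while you invoke Lemma~\ref{lem:degtwo} to get degree $\ge 3$ and then look for a planarity contradiction. These are logically equivalent routes.

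However, there is a genuine gap in your execution. After establishing that $y_2$ has a third neighbor $w$, you rule out $w\in\{v,b_1,b_2,z_i,y_i\}$ and then assert ``so $w$ is a vertex outside $V(R)\cup\{b_1,b_2\}$.'' That conclusion does not follow: $V(R)=B_1\cup B_2\cup D_2$ contains many vertices other than $v,z_1,z_2,z_3,y_1,y_2,y_3$. In particular $w$ could be a vertex $v'\in B_1\setminus\{v\}$ or a vertex $z'\in D_2$ with $B_1$-neighbor different from $v$, and neither of these is excluded by girth (the shortest cycle created is a $5$-cycle, e.g.\ $b_1\,v'\,y_2\,z_2\,v\,b_1$). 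Moreover, in precisely these leftover cases your final step fails for a different reason: $w$ is then contracted into $b_1$ (or suppressed and then absorbed), so $y_2w$ does not become a new edge $b_2w$ but instead another copy of $b_1b_2$, and your ``splits a $2$-face'' contradiction evaporates. Your closing remark treats the remaining work as cyclic-order bookkeeping, but the missing cases require a substantively different contradiction, not just tracking rotations.

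The cleaner route, which is what the paper does, is to argue directly from the two flanking $2$-faces: both edges $y_2z_2$ and $y_2b_2$ lie on the boundary of each of the two flanking faces of $G$, which forces these two edges to be the only edges at $y_2$, i.e.\ $d(y_2)=2$; then Lemma~\ref{lem:degtwo} gives the contradiction without any case analysis on a hypothetical third neighbor. This sidesteps the issue of where $w$ lies entirely. Relatedly, your intermediate claim that the three $b_1b_2$ copies from $P_1,P_2,P_3$ ``produce a block of consecutive parallel edges ... which forces the edges $vz_i$ to appear consecutively'' has the implication running the wrong way and is not justified as stated; the paper instead simply observes that among $\ge 3$ paths through $v$, at least one corresponding $b_1b_2$ edge is an interior edge of $R'$ (i.e.\ flanked on both sides by $2$-faces), since at most two can be boundary edges of the region.
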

\begin{proof}
The fact that $B_1$ and $B_2$ are independent sets follows from
the assumption that $G$ has girth at least five. 
Now choose $v\in B_1\cup B_2$ and suppose, for a contradiction, that $|N(v)\cap
V(R)|\ge 4$.  Without loss of generality, we may assume that $v\in B_1$; 
see Figure~\ref{fig:smalldegreeinregions}.
Recall that $V(R)\subseteq B_1\cup B_2\cup D_2$.  Since $B_1$ is independent,
$|N(v)\cap B_1|=0$; thus $|N(v)\cap (B_2\cup D_2)|\ge 4$. Since $G$ has girth at least
five, $|N(v)\cap B_2|\le 1$, so $|N(v)\cap D_2|\ge 3$. Hence, by planarity,
there exists $u\in N(v) \cap D_2$ such that if $w$ is the other neighbor of
$u$, then $vw \in E(G')$ (actually $v$ gets contracted into $b_1$ and $w$ gets
contracted into $b_2$ when forming $G'$) and $vw$ is incident with two faces,
each of length two, in region $R'$. Since $G$ has girth at least five, it
follows that $w$ has degree two in $G$. But now $u$ and $w$ are adjacent vertices of
degree two, yet $u\not\in N(B)$, which contradicts Lemma~\ref{lem:degtwo}.
\end{proof}

\begin{figure}[!h]
\center
\begin{tikzpicture}
    \tikzstyle{whitenode}=[draw,ellipse,fill=white,minimum size=8pt,inner sep=0pt]
    \tikzstyle{blacknode}=[draw,circle,fill=black,minimum size=4pt,inner sep=0pt]
\draw (0,0) node[whitenode] (b1) [label=0:$b_1$] {\small{$\sqrt{k}^+$}} 
-- ++(-90:1cm) node[whitenode] (v) [label=-0:$v$] {}
-- ++(-90:1cm) node[blacknode] (u) [label=-0:$u$] {}
-- ++(-90:1cm) node[blacknode] (w) [label=-0:$w$] {}
-- ++(-90:1cm) node[whitenode] (b2) [label=-0:$b_2$] {\small{$\sqrt{k}^+$}}
 ++(-90:1cm) node {$G$};

\draw (-1,-2) node[blacknode] (v1) {};
\draw (1,-2) node[blacknode] (v2) {};

\draw (-1,-3) node[whitenode] (w1) {};
\draw (1,-3) node[whitenode] (w2) {};

\foreach \i in {1,2}
{
\draw (v\i) -- (v);
\draw (v\i) -- (w\i);
\draw (w\i) -- (b2);
}

\draw[thick,densely dotted] (-1.3,-1.7) rectangle node {} (1.3,-2.3);
\draw (2.04,-2) node {$D_2$};

\draw[thick,densely dotted] (-1.3,-2.7) rectangle node {} (1.3,-3.3);
\draw (2.04,-3) node {$B_2$};

\draw (3,-2.5) node {$\leadsto$};

\draw (5,0) node[whitenode] (b1) [label=0:$b_1$] {\small{$\sqrt{k}^+$}} 
 ++(-90:4cm) node[whitenode] (b2) [label=-0:$b_2$] {\small{$\sqrt{k}^+$}}
  ++(-90:1cm) node {$G'$};
  
  \draw[bend left=30] (b1) edge node {} (b2);
  \draw[bend left=-30] (b1) edge node {} (b2);
  \draw[bend left=0] (b1) edge node {} (b2);


\end{tikzpicture}
\caption{An illustration of the proof of Lemma~\ref{lem:smalldegreeinregions}.}
\label{fig:smalldegreeinregions}
\end{figure}
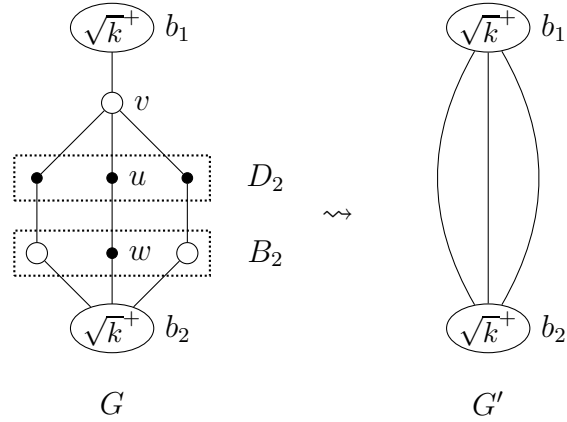

To complete the proof of Theorem~\ref{th:main}, we need one more reducible
configuration; in Lemma~\ref{lem:smallregion}, we show that an $r$-region is
reducible, if $r\ge 161$.  Before that, we need two lemmas about list-coloring.
 The first played a key role in Galvin's proof~\cite{Galvin95} that
$\chil'(G)=\Delta(G)$ for every bipartite graph $G$ (here $\chil'$ denotes the
edge list chromatic number).

A \emph{kernel} in a digraph $D$ is an independent set $F$ of vertices such that
each vertex in $V(D)\setminus F$ has an out-neighbor in $F$. A digraph $D$ is
\emph{kernel-perfect} if for every $A\subseteq V(D)$, the digraph $D[A]$ has a
kernel. 
To prove our next result,  
we will need the following lemma of Bondy, Boppana, and Siegel
(see~\cite[p.~129]{AlonT92} and \cite[p.~155]{Galvin95}).
For completeness, we include an easy proof.

\begin{lemma}\label{lem:kernelperfect}
Let $D$ be a kernel-perfect digraph with underlying graph $G$. If $L$ is a
list-assignment of $V(G)$ such that for all $v\in V(G)$,
$$|L(v)|\ge d^{+}(v) + 1,$$
then $G$ is $L$-colorable.
\end{lemma}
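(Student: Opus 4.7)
The plan is to proceed by induction on $|V(G)|$, with the empty graph as a trivial base case. For the inductive step, I would pick any color $c$ that appears in some list, and let $A=\{v\in V(G) : c\in L(v)\}$. Since $D$ is kernel-perfect, the induced subdigraph $D[A]$ has a kernel $F$. I would then color every vertex of $F$ with color $c$; this is a proper partial $L$-coloring because $F$ is independent in $D$ (hence in $G$) and $c\in L(v)$ for each $v\in F$.

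Next, I would delete $F$ and define a new list assignment $L'$ on $V(G)\setminus F$ by $L'(v) = L(v)\setminus\{c\}$. The key verification is that the hypothesis of the lemma is preserved for the smaller instance: for every remaining vertex $v$, I want $|L'(v)|\ge d^+_{D-F}(v)+1$. If $v\notin A$, then $c\notin L(v)$, so $L'(v)=L(v)$ and deleting $F$ can only decrease the out-degree, so the inequality is inherited. If $v\in A\setminus F$, then the defining property of a kernel guarantees that $v$ has an out-neighbor in $F$, so $d^+_{D-F}(v)\le d^+_D(v)-1$, while $|L'(v)| = |L(v)|-1$; the one lost color is exactly matched by the one lost out-neighbor, so the inequality is preserved. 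Also, $D-F$ remains kernel-perfect since the kernel-perfect property is inherited by induced subdigraphs by definition.

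Applying the induction hypothesis to $(G-F, L')$ and combining the resulting $L'$-coloring with the coloring of $F$ by $c$ gives the desired proper $L$-coloring of $G$. The only real obstacle is the bookkeeping in the inductive step, but it is precisely the kernel property that makes the list-size loss and the out-degree loss coincide; this is the design principle behind the argument.
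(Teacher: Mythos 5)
Your proof is correct and follows essentially the same approach as the paper: induction on the number of vertices, choosing a color $c$, taking a kernel $F$ of the subdigraph induced by $A_c=\{v:c\in L(v)\}$, coloring $F$ with $c$, and observing that the kernel property makes the loss of the color $c$ from the list of each $v\in A_c\setminus F$ exactly compensated by the loss of an out-neighbor in $F$. The bookkeeping you highlight as ``the design principle'' is precisely the observation the paper makes.
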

\begin{proof}
We use induction on $|V(G)|$.  Choose some color $c\in \cup_{v\in V(G)}L(v)$.
Let $A_c$ be the set of vertices with color $c$ in their lists.  By assumption,
$D[A_c]$ contains a kernel, $F_c$.  Use color $c$ on each vertex of $F_c$.
Now let $D' = D\setminus F_c$ and $L'(v)=L(v)-c$ for each $v\in V(D')$.  By
induction, the remaining uncolored digraph $D'$ can be colored from its lists
$L'$; we must only check that $D'$ and $L'$ satisfy the hypothesis of the
lemma.  Since $D$ is kernel-perfect, so is $D'$.  Further, each
vertex of $D'$ lost at most one color from its list (namely, $c$). More
precisely, each vertex of $A_c\setminus F_c$ lost one color from its list and
each other vertex lost no colors.
Fortunately, since $F_c$ is a kernel for $A_c$, we get $d^+_{D'}(v)\le
d^+_D(v)-1$ for each $v\in A_c\setminus F_c$.  Thus, $|L'(v)|\ge
d^+_{D'}(v)+1$ for every $v\in V(D')$, as desired.
\end{proof}

We now use Lemma~\ref{lem:kernelperfect} to prove the following lemma, which we will 
use to show that large regions are reducible for square $(\Delta+2)$-choosability.

\begin{lemma}
\label{lem:twocliques}
Let $H$ be a graph covered by two disjoint cliques $B_1$ and $B_2$, $L$ be a
list-assignment for $V(H)$, and $S_1\subseteq B_1$ and $S_2 \subseteq B_2$ be
such that

\begin{itemize}
\item if $v\in B_i$, then $|N(v)\cap V(B_{3-i})|\le 3$,
\item if $v\in B_i\setminus S_i$, then $|L(v)|\ge |B_i|$,
\item if $v\in S_i$, then $|L(v)|\ge |B_i|-3$.
\end{itemize}

Now if $|B_1|\ge 46$, $|B_2|\ge 46$, $|S_1|\le 12$, and $|S_2|\le 12$, then
$H$ is $L$-colorable.  
\end{lemma}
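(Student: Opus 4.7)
My plan is to apply Lemma~\ref{lem:kernelperfect}. I will construct an orientation $D$ of $H$ such that $d^+(v)+1\le|L(v)|$ for every $v$, and then verify that $D$ is kernel-perfect; Lemma~\ref{lem:kernelperfect} will then deliver the desired $L$-coloring.

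For each $i\in\{1,2\}$, I fix a linear order $\pi_i$ on $B_i$ that places $S_i$ at positions $1,\dots,|S_i|$, and orient $B_i$ as the transitive tournament in which higher position points to lower. Let $T_i$ denote the three vertices of $B_i$ in the top positions $|B_i|-2,|B_i|-1,|B_i|$. Using the freedom in the internal ordering of $B_i\setminus S_i$, I choose $T_1$ and $T_2$ so that $T_2\cap N(T_1)=\emptyset$: since $|B_2\setminus S_2|\ge 34$ and $|N(T_1)\cap B_2|\le 3\cdot 3=9$, at least $25$ vertices of $B_2\setminus S_2$ can serve as $T_2$ without being cross-adjacent to any vertex of $T_1$. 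For each cross-edge incident to $T_1\cup T_2$ I orient it into the $T_i$-endpoint, so that every vertex of $T_1\cup T_2$ has cross out-degree zero; this is well-defined precisely because no cross-edge joins $T_1$ and $T_2$. The remaining cross-edges I orient by an upward-closed rule: for each $u\in B_1\setminus T_1$, the set $S(u)=\{w\in N(u)\cap B_2 : w\to u\}$ is an upward-closed subset of $N(u)\cap B_2$ in $\pi_2$, and symmetrically for $R(w)=\{u\in N(w)\cap B_1 : u\to w\}$ on the $B_2$ side. I check that such a consistent upward-closed choice exists; the only potential conflict is a cross-edge between the "forced" sets $N(T_2)\cap(B_1\setminus T_1)$ and $N(T_1)\cap(B_2\setminus T_2)$, and the large slack ($|B_i\setminus S_i|\ge 34$ versus $|N(T_{3-i})|\le 9$) lets me refine the choice of $T_1,T_2$ to avoid it.

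Checking the out-degree bound is routine: for $v\in S_i$, $d^+(v)\le(|S_i|-1)+3\le 10\le|B_i|-5\le|L(v)|-1$; for $v\in B_i\setminus S_i\setminus T_i$, $d^+(v)\le(|B_i|-4)+3=|B_i|-1\le|L(v)|-1$; and for $v\in T_i$, $d^+(v)\le(|B_i|-1)+0=|B_i|-1\le|L(v)|-1$.

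The main obstacle will be verifying kernel-perfectness. For any nonempty $A\subseteq V(H)$, let $u_i=\min_{\pi_i}(A\cap B_i)$ when $A\cap B_i\ne\emptyset$. If $A$ lies in a single clique, $\min_{\pi_i}(A)$ is a kernel of $D[A]$; if $A$ meets both cliques and $u_1u_2\notin E(H)$, then $\{u_1,u_2\}$ is a kernel. The remaining case $u_1u_2\in E(H)$ splits into sub-cases according to whether $u_1\in T_1$ and $u_2\in T_2$. In each sub-case the upward-closedness of the cross-edge orientation lets me extend $\{u_2\}$ (or, symmetrically, $\{u_1\}$) to a kernel of the form $\{u_2,v^*\}$, where $v^*$ is the $\pi_1$-minimum vertex of $(A\cap B_1)\setminus N[u_2]$ (or an appropriate analogue when that set is empty or consists entirely of $T_1$-vertices). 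The bookkeeping is delicate precisely because the forced cross-edge orientations at $T_1\cup T_2$ interact with the free orientations elsewhere, and closing every sub-case of this analysis is the technical heart of the argument.
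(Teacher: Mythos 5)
Your plan is the same in spirit as the paper's: construct a kernel-perfect orientation of $G^2[B_1'\cup B_2']$ via transitive tournaments inside each clique plus special handling of three ``top'' vertices on each side, then invoke Lemma~\ref{lem:kernelperfect}. The out-degree arithmetic you give is essentially what the paper does and is fine. However, you make a genuinely different choice for the cross-edges --- one-directional under an ``upward-closed'' (stable) rule, rather than the paper's bidirected cross-edges with forced orientations only at the top three --- and the two pieces you'd need to justify this choice are both left open.

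First, and most critically, you explicitly leave kernel-perfectness unverified: ``closing every sub-case of this analysis is the technical heart of the argument.'' That is the entire content of the lemma. (For what it's worth, your stable orientation does give a clean kernel argument if carried through: if $u_1u_2$ is oriented $u_1\to u_2$, then $u_1\in R(u_2)$ and upward-closedness of $R(u_2)$ makes $u_2$ dominate every vertex of $A\cap B_1\cap N(u_2)$, after which $\{u_2, v^*\}$ with $v^*=\min_{\pi_1}(A\cap B_1)\setminus N[u_2]$ is a kernel; the $T_i$-cases are degenerate instances of this. But none of this is in your write-up, and your own formula for $v^*$ is quoted with the wrong polarity --- you extend $\{u_2\}$ when you should be extending the endpoint that the edge points into.) Second, the existence of a legal pair $(T_1,T_2)$ is asserted from ``large slack,'' but the counting as stated does not close. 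Choosing $T_1$ arbitrarily, $|N(T_1)\cap B_2|\le 9$, and the set of $B_2$-vertices forbidden for $T_2$ by the length-3/consistency condition is $N_{B_2}(N_{B_1}(N_{B_2}(T_1)\setminus T_2)\setminus T_1)$, which a crude bound puts at up to $9+18\cdot 2=45$; adding $|S_2|\le 8$ exceeds $|B_2|\ge 42$, so no room remains for $T_2$. The paper avoids this by choosing the top three of one side to be $N_{B_2}(u)$ for a single vertex $u$ (if such $u$ exists), which caps $|N_{B_1}(Z)\setminus\{u\}|$ at $6$ rather than $9$ and makes the third-neighborhood count fit inside $42-|S_1|-1$. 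Without that device --- or a replacement --- your choice of $T_1,T_2$ may simply not exist, and then the upward-closed orientation itself has an unavoidable conflict (an edge between $N(T_2)\cap(B_1\setminus T_1)$ and $N(T_1)\cap(B_2\setminus T_2)$ cannot be oriented consistently). Both issues would need to be closed before this constitutes a proof.
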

\begin{proof}
We construct a kernel-perfect orientation $D$ of $H$ satisfying
Lemma~\ref{lem:kernelperfect} as follows. Let $x_1,x_2,\ldots, x_{|B_1|}$ be an
ordering of the vertices of $B_1$ and $y_1,y_2,\ldots, y_{|B_2|}$ be an
ordering of the vertices of $B_2$ such that 

\begin{itemize}
\item $x_i \in S_1$ iff $1\le i \le |S_1|$, and
\item $y_i \in S_2$ iff $1\le i \le |S_2|$, and
\item $N_{B_2}(x_a) \cap N_{B_2}(N_{B_1}(y_b))  = \emptyset$ for each $a$ and
$b$ such that $|B_1|-2\le a\le |B_1|$ and $|B_2|-2\le b\le |B_2|$.
\end{itemize}

It is helpful to restate the third condition in words: there is no path of
length 1 or 3 that starts at one of the final 3 vertices in $B_1$, ends at one
of the final 3 vertices in $B_2$, and alternates between $B_1$ and $B_2$.
We claim that such an ordering exists. To see this, let the vertices of $S_1$
be $x_1, \ldots, x_{|S_1|}$ in any order and similarly for $S_2$. Now it
suffices to ensure the third condition holds.  Note that $|B_1|-3|S_2|\ge 10$.
Suppose there exists $u\in B_1\setminus N(S_2)$ with $d_{B_2}(u)=3$.  Choose
$N_{B_2}(u)$ to be the three final vertices of $B_2$, and call this set $Z$.

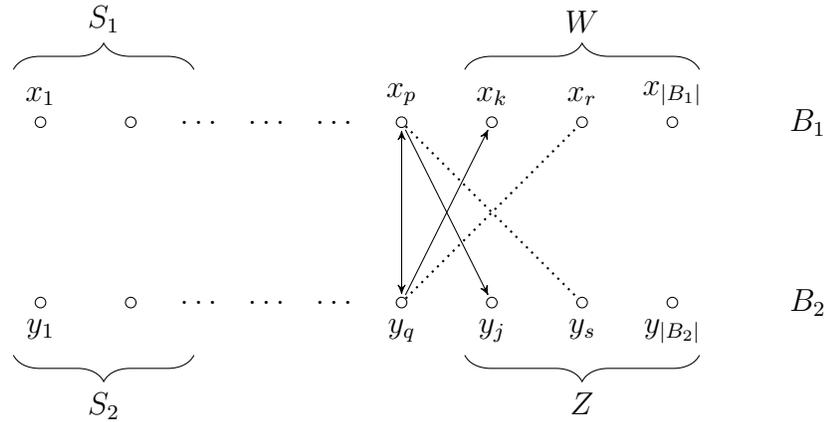
\begin{figure}[!h]
\center
\begin{tikzpicture}[scale=1.2]
    \tikzstyle{whitenode}=[draw,ellipse,fill=white,minimum size=4pt,inner sep=0pt]
    \tikzstyle{blacknode}=[draw,circle,fill=black,minimum size=4pt,inner sep=0pt]

\draw (0,0) node[whitenode] (x1) [label=90:$x_1$] {};
\draw (1,0) node[whitenode] (x2) {};
\draw (1.75,0) node {$\ldots$};
\draw (2.5,0) node {$\ldots$};
\draw (3.25,0) node {$\ldots$};
\draw (4,0) node[whitenode] (xp) [label=90:$x_p$] {};
\draw (5,0) node[whitenode] (xk) [label=90:$x_k$] {};
\draw (6,0) node[whitenode] (xr) [label=90:$x_r$] {};
\draw (7,0) node[whitenode] (xB) [label=90:$x_{|B_1|}$] {};
\draw (8.5,0) node {$B_1$};

\draw (0,-2) node[whitenode] (y1) [label=-90:$y_1$] {};
\draw (1,-2) node[whitenode] (y2) {};
\draw (1.75,-2) node {$\ldots$};
\draw (2.5,-2) node {$\ldots$};
\draw (3.25,-2) node {$\ldots$};
\draw (4,-2) node[whitenode] (yq) [label=-90:$y_q$] {};
\draw (5,-2) node[whitenode] (yj) [label=-90:$y_j$] {};
\draw (6,-2) node[whitenode] (ys) [label=-90:$y_s$] {};
\draw (7,-2) node[whitenode] (yB) [label=-90:$y_{|B_2|}$] {};
\draw (8.5,-2) node {$B_2$};

\draw [decoration={brace,amplitude=10pt,raise=0.7cm},decorate] (-0.3,0) -- (1.7,0) node [pos=0.5,anchor=south,yshift=1.05cm] {$S_1$};
\draw [decoration={brace,amplitude=10pt,mirror,raise=0.7cm},decorate] (-0.3,-2) -- (1.7,-2) node [pos=0.5,anchor=north,yshift=-1.05cm] {$S_2$};

\draw [decoration={brace,amplitude=10pt,raise=0.7cm},decorate] (4.7,0) -- (7.3,0) node [pos=0.5,anchor=south,yshift=1.05cm] {$W$};
\draw [decoration={brace,amplitude=10pt,mirror,raise=0.7cm},decorate] (4.7,-2) -- (7.3,-2) node [pos=0.5,anchor=north,yshift=-1.05cm] {$Z$};

\draw[<->,>=stealth',shorten <=1pt,shorten >=1pt] (xp) edge node {} (yq);
\draw[->,>=stealth',shorten <=1pt,shorten >=1pt] (xp) edge node {} (yj);
\draw[->,>=stealth',shorten <=1pt,shorten >=1pt] (yq) edge node {} (xk);
\draw[dotted, thick] (xp) edge node {} (ys);
\draw[dotted, thick] (yq) edge node {} (xr);

\end{tikzpicture}
\caption{The proof of Lemma~\ref{lem:twocliques}, constructing the orientation
$D$ of $H$, which shows that this situation cannot occur, due to our choices of
$W$ and $Z$.}
\label{fig:twocliques}
\end{figure}

Now $|N_{B_1}(Z)\setminus\{u\}|\le 6$, so $|N_{B_2}(N_{B_1}(Z))\setminus Z|\le
6(2)=12$ and $|N_{B_1}(N_{B_2}(N_{B_1}(Z))\setminus Z)|\le 12(2)+6=30$.
Since $|B_1|-|S_1|-30-|\{u\}|\ge 3$, we can choose the desired 3 final
vertices of $B_1$; call this set $W$.  If no such $u$ exists, then there exist
3 vertices $v_1,v_2,v_3\in B_1\setminus (S_1\cup N(S_2))$ such that
$d_{B_2}(v_i)\le 2$ for all $i\in\{1,2,3\}$.  Now swap the roles of $B_1$ and
$B_2$ and let $Z=\{v_1,v_2,v_3\}$.  The analysis is essentially the same,
except that now we have no vertex $u$.  This proves the claim that such an
ordering exists.

Let $D$ be obtained from $H$ by directing the edges of $H$ as follows. 
For each edge with both endpoints in $B_1$ or both endpoints in $B_2$, direct
the edge from the vertex with higher index to the vertex with lower index.
For each edge between $B_1$ and $B_2$, direct the edge in both directions,
unless one endpoint is among the final three vertices of $B_1$ or $B_2$; in that
case, only direct the edge into the vertex among the final three (recall that no
edge has one endpoint among the final three vertices of $B_1$ and the other
endpoint among the final three vertices of $B_2$).

We claim that $D$ is a kernel-perfect orientation. Let $A\subseteq V(H)$. Let
$p=\min \{i: x_i \in A\}$ and $q=\min \{j: y_j\in A\}$. If $A\cap
V(B_2)=\emptyset$, then $\{x_p\}$ is a kernel of $A$ as desired. Similarly if
$A\cap V(B_1)=\emptyset$, then $\{y_q\}$ is a kernel of $A$ as desired. So we may
assume that $A\cap V(B_1)\ne \emptyset$ and $A\cap V(B_2)\ne \emptyset$. If
$x_py_q\not\in E(H)$, then $\{x_p,y_q\}$ is a kernel of $A$ as desired. 
So we assume that $x_py_q\in E(H)$. 

Let $r=\min \{k: x_k \in A, x_k \not\in N_H(y_q) \}$ and $s=\min \{\ell: y_\ell
\in A, y_\ell \not\in N_H(x_p) \}$.  Now $\{x_p, y_s\}$ is a kernel of $A$,
unless there exists $j$ with $q\le j < s$ such that $y_j\in A$ and $x_py_j$ is
either not an edge of $H$ or is only directed from $x_p$ to $y_j$. 
Given the choice of $s$, it must be that $x_py_j$ is
only directed from $x_p$ to $y_j$.  (If $\{\ell:y_{\ell}\in A,y_{\ell}\notin
N(x_p)\}=\emptyset$, then the same argument works with $\{x_p\}$ in place of
$\{x_p,y_s\}$.)
Thus, we conclude that $y_j$ is among the final
3 vertices of $B_2$.  Now, we instead take as our kernel $\{y_q,x_r\}$.
This is a kernel unless there exists $k$ with $p\le k<r$ such that $x_k\in A$
and either $x_ky_q$ is not an edge or it is only directed from $y_q$ to $x_k$. 
Given our choice of $r$, we know that $x_ky_q$ is an edge.  But if $x_ky_q$ is
only directed from $y_q$ to $x_k$, then $x_k$ is among the final 3 vertices of
$B_1$.  (Similar to above, if $\{k:x_k\in A,x_k\notin N(y_q)\}=\emptyset$, then
we use $\{y_q\}$ in place of $\{y_q,x_r\}$.) However, this is impossible, since
now the path $x_ky_qx_py_j$ contradicts the third condition.  Thus, $D$ is
kernel-perfect, as desired.

Finally, we claim that $|L(v)|\ge d^+_D(v)+1$ for all $v\in V(H)$.  First
suppose that $v\in S_1\cup S_2$.  Now $v$ has at most 11 out-neighbors
within its clique and at most 3 out-neighbors in the other clique, so
$d^+_D(v)\le 14$.  Since $|B_1|\ge 18$ and $|B_2|\ge 18$, we have $|L(v)|\ge
|B_i|-3\ge 15\ge d^+_D(v)+1$.  Next, suppose that $v\in (B_1\cup B_2)\setminus
(S_1\cup S_2)$, but $v$ is not among the final 3 vertices of either $B_i$.  
By symmetry, we can assume that $v\in B_1$.  Since $v$ has no
out-neighbors among the final 3 vertices of $B_1$, it has at most $|B_1|-4$
out-neighbors in $B_1$.  Since $v$ has at most 3
out-neighbors in $B_2$, we have $|L(v)|\ge |B_1|=(|B_1|-4)+3+1\ge d^+_D(v)+1$.
Now suppose that $v$ is among the final 3 vertices of $B_1$ or $B_2$; by
symmetry, assume that $v\in B_1$.  Since all out-neighbors of $v$ are in $B_1$,
we get $d^+_D(v)\le |B_1|-1$; thus, $|L(v)|\ge d^+_D(v)+1$.
\end{proof}


\begin{lemma}
\label{lem:smallregion}
For every $r\ge 161$, graph $G$ does not have an $r$-region.
\end{lemma}
\begin{proof}
Suppose, to the contrary, that $G$ has such an $r$-region $R$, with $r\ge 161$.
Let $B_1, B_2, D_2, b_1$, and $b_2$ be as in Observation~\ref{rregionObs}.
Let $v_1$ and $v_2$ be adjacent vertices of $B_1\cup B_2\cup D_2$ such that
every vertex within distance 2 in $G$ of $v_1$ or $v_2$ is 
in $\{b_1,b_2\}\cup N(b_1)\cup N(b_2)\cup V(R)$.
To see that such vertices exist, pick $v_1\in B_1$ such that each face
containing $v_1$ is in $R$, and let $v_2$ be a neighbor of $v_1$ in $B_2\cup
D$.  By the minimality of $G$, we can $L$-color $(G-v_1v_2)^2$; call this
coloring $\varphi$.  Now we uncolor many of the vertices in $V(R)$ and extend
the coloring to $G$ using Lemma~\ref{lem:twocliques}, as well as greedily
coloring vertices of $D_2$ last.  The details forthwith.

Note that $\card{N(b_1)\cap N(b_2)}\le 1$, since $G$ has girth at least 5.
In fact, if $v\in N(b_1)\cap N(b_2)$, then $v\in V(G')$, since $v$ is neither
suppressed nor contracted into a big vertex.  Thus, $v\notin V(R)$.
So $V(R)\cap N(b_1)\cap N(b_2)=\emptyset$. 

Let $S$ be the set of vertices in $B_1\cup B_2$ that are
incident with a face of $G$ not in $R$. 
Let $B_1'=B_1\setminus N[S]$ and $B_2'=B_2\setminus
N[S]$. Note that $B_1'$ and $B_2'$ are independent
sets in $G$ but are cliques in $G^2$.  Let $H=G^2[B'_1\cup B'_2]$.  For each
$v\in V(H)$, let $L'(v) = L(v) \setminus \{c:\varphi(w) = c\mbox{ for some }w
\in N_{G^2}(v)\setminus (V(H)\cup D_2)\}$.  
Let $S_i=B_i'\cap N(N[N[S]]\cap D_2)$ for each $i\in \{1,2\}$.
Note that $S_i$ consists of vertices of $B_i'$ that are adjacent in $G^2$ to
(colored) vertices in $N[N[S]]$ via vertices in $D_2$.

To color $H$ by Lemma~\ref{lem:twocliques}, 
we first verify that each $v\in B'_i$ has at most 3 neighbors in $B'_{3-i}$
in $H$.  By symmetry, assume $v\in B'_1$.  Now each neighbor of $v$ in $B'_2$ in
$H$ is either adjacent to $v$ in $G$ or has a common neighbor with $v$ in $D_2$.
Further, each neighbor in $G$ in $V(R)$ yields at most one such neighbor in
$B'_2$, since $B'_2$ is independent in $G$ and each vertex in $D_2$ has degree
2.  So we are done by Lemma~\ref{lem:smalldegreeinregions}.

We must also verify that $S_1$ and $S_2$ are small enough and that $B_1',
B_2'$, and all of the lists $L'$ are big enough.  
Note that $\card{B_1\cap S}= 2$ and $\card{B_2\cap S}=2$, since each vertex of
$S$ must be on the first or last edge of the $r$-region, in $G'$, and each of
these edges has exactly one vertex in each of $B_1$ and $B_2$.  
Each $v\in N(S)\cap B_2$ has at most three neighbors, in $G^2$, in $B_1$ by
Lemma~\ref{lem:smalldegreeinregions}.  Further, one of these three is in $S$.
So $|S_1|=|B_1'\cap N(N[N[S]]\cap D_2)|\le 4|S\cap B_1|+2|S\cap B_2|\le
4(2)+2(2)=12$.  Similarly, $|S_2|\le 12$.

Consider $v\in B_1'\cup B_2'$; by symmetry, assume $v\in B_1'$.
Note that $|L'(v)| \ge \card{B_1'}$ whenever $v\in B_1'\setminus S_1$,
since each $v\in B'_1\setminus S_1$ loses at most one color for each vertex
in $(\{b_1,b_2\}\cup N(b_1))\setminus (V(H)\cup D_2)$, and
$D_2\cap(\{b_1,b_2\}\cup N(b_1))=\emptyset$.
Each $v\in B_1$ has at most three neighbors in $B_{2}$.  
Thus, each vertex $v\in S_1$ has at most three colored neighbors, in $G^2$, in
$B_2\setminus B'_2$.  So, $v$ loses at most three more colors than in the
analysis for vertices in $B'_1\setminus S_1$.  Hence, each $v\in S_1$ has
$|L'(v)|\ge \card{B_1'}-3$. 
Similarly, for each $v\in S_2$ we get $|L'(v)|\ge |B'_2|-3$.

Now we show that $B_1'$ and $B_2'$ are big enough. 
The number of edges of $G'$ incident with the region $R'$ is
$|R'|+1$. By 
Lemma~\ref{lem:smalldegreeinregions},
every vertex of $B_1$ or $B_2$ is
in at most three of those edges, so $|B_1|\ge (|R|+1)/3$ and $|B_2| \ge
(|R|+1)/3$; we can actually get better bounds using planarity, but we omit
that argument to keep the proof simpler.
Now $|S\cap B_1|=2$ and $|N(S)\cap B_1| \le
3|S\cap B_2| \le 6$, so $|N[S]\cap B_1|\le 8$. Hence $|B_1'|
\ge (|R|+1)/3 - |N[S]\cap B_1|\ge (161+1)/3-8=46$. 
Similary, $|B_2'| \ge 46$.

Thus, we can use Lemma~\ref{lem:twocliques} to extend the
coloring to $V(H)$.  After coloring $V(H)$, for each vertex $x\in D_2$, we can
color it arbitrarily from its list, since $|L(x)|\ge k+2$ and $d_{G^2}(x) \le
2\sqrt{k}$.  Hence, $G^2$ has an $L$-coloring, a contradiction.
\end{proof}

\begin{proof}[Proof of Theorem~\ref{th:main}]
Recall, from the start of Section~\ref{sec:proof}, that $G$ is a minimal
counterexample to Theorem~\ref{th:main}.  By Corollary~\ref{cor:highmultipB},
$G$ contains some $r$-region with $\frac{\sqrt{k}}{10} - 13 \le r$.
By Lemma~\ref{lem:smallregion}, $G$ contains no $r$-region with $r>160$.
Thus we have $\frac{\sqrt{k}}{10}-13\le 160$.  Simplifying gives
$k\le1,730^2=2,992,900$.  Thus, when $\Delta\ge 1,730^2+1$ we reach a
contradiction, which proves the theorem (our main result).  
\end{proof}
By relying more heavily on planarity, we can reduce the value of $\Delta_0$.
However, that approach adds numerous complications, which we prefer to avoid.

\section{A Coloring Algorithm and Extending to Paintability}
\label{paint}
In this section, we explain how the proof of Theorem~\ref{th:main} yields a
polynomial algorithm to color $G^2$ from its lists.  In fact, we give an
algorithm for the more general context of paintability (which we define below).
 Essentially, we construct a vertex order $\sigma$ such that we can consider
the vertices of $G$ in order $\sigma$ and color them greedily from their lists,
but there is a wrinkle.  If vertices appear together in an $r$-region, for
$r\ge 161$, then we consider them simultaneously, and color them as in the
proofs of Lemmas~\ref{lem:twocliques} and~\ref{lem:smallregion}.

Our proof of Theorem~\ref{th:main} in fact shows the following. 

\begin{thm}
\label{thm:struct}
Let $G$ be a planar graph with girth at least 5 and fix
$k\ge\max\{\Delta(G),1730^2+1\}$.
As in Section~\ref{sec:proof}, let $B$ denote the set of vertices $w$ with
$d(w)\ge \sqrt{k}$.  Now $G$ contains at least one of the following:
\begin{enumerate}
\item[(a)] only a single vertex,
\item[(b)] 2 or more components,
\item[(c)] a vertex of degree at most 1,
\item[(d)] an edge $uv$ such that $u\notin N[B]$ and $v\notin N[B]$,
\item[(e)] adjacent 2-vertices $u$ and $v$ such that $u\notin N(B)$ or $v\notin
N(B)$, and
\item[(f)] an $r$-region with $r\ge 161$.
\end{enumerate}
\end{thm}

\begin{proof}
The proof is essentially the same as the proof of
Corollary~\ref{cor:highmultipB}.  The key observation is that in that proof, and
the results upon which it depends, we do not explicitly use that $G$ is a
minimal counterexample to Theorem~\ref{th:main}; we only need that $G$ has no
instance of (a), (b), (c), (d), or (e).  So, as in
Corollary~\ref{cor:highmultipB}, we conclude that $G$ has an $r$-region with
$r\ge \frac{\sqrt{k}}{10}-13$.  Since $k>1730^2$, this gives $r>160$, as
desired.
\end{proof}

The game of \emph{$b$-paintability} (also called \emph{online
$b$-list-coloring}) is
played between two players, Lister and Painter.  On round $i$, Lister presents a
set $J_i$ of uncolored vertices.  Painter responds by choosing some independent
set $I_i\subseteq J_i$ to receive color $i$.  If Painter eventually colors
every vertex of the graph, then Painter wins.  If instead Lister presents some
uncolored vertex on $b$ rounds, but Painter never colors it, then Lister wins.
The \emph{paint number} $\chip(G)$ is the minimum $b$ such that Painter can win
regardless of how Lister plays.  Let $G$ be a planar graph with girth at least
five.  We show that if $G$ has maximum degree $\Delta\ge 1730^2+1=2,992,901$, then
$\chip(G^2)\le \Delta(G)+2$.

\begin{thm}
Let $G$ be a planar graph with girth at least 5.  Let
$k=\max\{\Delta(G),1730^2+1\}$.  Now $\chip(G)\le k+2$.
\end{thm}
\begin{proof}
A \emph{weak order} of a vertex set $V(G)$ is a generalization of a total
order, where we partition $V(G)$ into subsets and then form a total order on
these subsets.  In many cases, the subsets will be singletons, though not
always.  Further, each non-singleton subset gets a label that is all
vertices in the subset.  (Later, we may remove vertices from a subset, but we
never change its initial label.) For convenience, we simply list the subsets to
reflect the total order (from least to greatest).

Let $G$ satisfy the hypothesis.
We first construct a weak order $\sigma$ of $V(G)$, by induction on
$\card{V(G)}+\card{E(G)}$, using the six cases in
Theorem~\ref{thm:struct}, applying the first case that is applicable.

\begin{enumerate}
\item[(a)] $G=v$ and $\sigma=v$.

\item[(b)]
Suppose $G$ has 2 or more components $G_1,\ldots, G_t$, with $t\ge 2$.  By
hypothesis, construct weak orders $\sigma_1,\ldots,\sigma_t$, and form $\sigma$
by concatenating these, in any order.

\item[(c)]
If $G$ has a vertex $v$ of degree at most 1, then let $G'=G-v$.  Let $\sigma'$
be the weak order for $G'$ and form $\sigma$ by appending $v$ to $\sigma'$.

\item[(d)]
If $G$ has an edge $uv$ such that $u\notin N[B]$ and $v\notin N[B]$, then let
$G'=G-uv$.  Let $\sigma'$ be the order for $G'$.  Form $\sigma$ from $\sigma'$
by removing $u$ and $v$ from their places in the weak order and appending $u,v$.

\item[(e)]
If $G$ has adjacent 2-vertices $u$ and $v$ such that $u\notin N(B)$ or $v\notin
N(B)$, then by symmetry, assume $u\notin N(B)$.  Let $G'=G\setminus\{u,v\}$ and
let $\sigma'$ be the order for $G'$.  Form $\sigma$ from $\sigma'$ by appending
$v,u$.

\item[(f)]
If $G$ contains an $r$-region with $r\ge 161$, then define 
$B_1'$, $B_2'$, and $D_2$ as in the proof of Lemma~\ref{lem:smallregion}.  
Let $v_1$ and $v_2$ be vertices of $B_1'\cup B_2'\cup D_2$ such that every
vertex within distance 2 in $G$ of $v_1$ or $v_2$ is in $\{b_1,b_2\}\cup
N(b_1)\cup N(b_2)\cup V(R)$.  $G'=G-v_1v_2$ and let $\sigma'$ be the order for
$G'$.  Form $\sigma$ from $\sigma'$ by removing the vertices of $B_1',B_2'$, and
$D_2$ from wherever they appear, possibly in labeled (non-singleton) subsets,
appending the subset $B_1'\cup B_2'$ (with label $B_1'\cup B_2'$), followed
by each of the vertices of $D_2$ (as singletons) in arbitrary order.
\end{enumerate}

This completes the construction of the weak order $\sigma$ of $V(G)$.
Note that any labeled subset in $\sigma$ must arise from some $B_1'\cup
B_2'$ in (f).  Now we use $\sigma$ to describe a strategy for Painter to win
the $(k+2)$-painting game on $G^2$.

For a given round $i$, suppose Lister lists the set $J_i$.  Let $\sigma_i$
be the restriction to $J_i$ of $\sigma$.  Painter greedily constructs an
independent set $I_i$ as follows.  If the least element, $v$, in $\sigma_i$ is 
unlabeled, add it to $I_i$ and modify $\sigma_i$ by deleting all vertices
adjacent to $v$ in $G^2$.  Suppose instead the least element is a
labeled subset, call it $T_j$, which arose in (f) from some $r$-region, $R$.
Let $D$ be the digraph formed in the proof of Lemma~\ref{lem:twocliques} 
corresponding to $R$ (the vertices of $D$ are encoded in the label of $T_j$).
Since $D$ is kernel-perfect, $D[T_j]$ has a kernel, $T_j'$.  Now add the
vertices of $T_j'$ to $I_i$, and delete from $\sigma_i$ every vertex adjacent in
$G^2$ to one or more vertices of $T_j'$.  This completes the description of
Painter's strategy.  It can clearly be implemented in polynomial time.
Determining if an arbitrary graph has a kernel is NP-hard.  However, the proof
of Lemma~\ref{lem:twocliques} is constructive and gives rise to a simple
algorithm to find a kernel.

Finally, we show that Painter's strategy described above always wins the
$(k+2)$-painting game on $G^2$.  We need to consider vertices that were put into
$\sigma$ by each of (a) and (c)--(f). (In the process of recursively building $\sigma$,
a vertex $v$ may possibly be removed from a weak order for a smaller graph, and
reinserted at the end, as in (d) or (f).  In this case, we classify $v$
according to the final step that placed it in $\sigma$.)

\begin{enumerate}
\item[(a)] Suppose $v$ was put into $\sigma$ by (a).  Now $v$ has no
earlier neighbors (in $G^2$) in $\sigma$, so $v$ is colored on the first
round on which it appears.

\item[(c)]
Suppose vertex $v$ was put into $\sigma$ by
(c).  This means that $v$ has at most $k$ vertices that appear earlier in
$\sigma$ and are adjacent to $v$ in $G^2$.  Thus, $v$ can appear in
$J_i\setminus I_i$ on at most $k$ rounds.  So, when the game ends, $v$ is
colored.

\item[(d)]
Suppose vertex $v$ was put into $\sigma$ by (d).  Since $v\notin N[B]$, in
$G^2$ vertex $v$ has at most $(\sqrt{k})^2=k$ neighbors.  So, when the game
ends, $v$ is colored.

\item[(e)]
Suppose vertex $v$ was put into $\sigma$ by (e).  If the other 2-vertex $u$
put into $\sigma$ by (e) follows $v$, then at most $k+1$ vertices $w$ that are
adjacent in $G^2$ to $v$ precede $v$ in $\sigma$, so when the game ends $v$ will
be colored.  Otherwise, $v\notin N[B]$, so in $G^2$, vertex $v$ has at most $k$
neighbors, among vertices earlier in $\sigma$.  So, 
when the game ends, $v$ is colored.

\item[(f)]  Finally, suppose $v$ was put into $\sigma$ by (f).  If $v$ was in
$D_2$ for some $r$-region, then at most $k$ neighbors in $G^2$ of $v$ precede
$v$ in $\sigma$, so $v$ will be colored when the game ends.  Thus, we assume
$v\in B_1'\cup B_2'$ for some $r$-region (with $r\ge 161$);  by symmetry,
assume that $v\in B'_1$.  Let $D$ be the digraph formed in the proof of
Lemma~\ref{lem:twocliques} by orienting edges of $G^2[B_1'\cup B_2']$.  Recall
that $d^+_D(v)\le \card{B'_1}-1$ if $v\in B'_1\setminus S_1$.  Also,
$d^+_D(v)\le \card{B'_1}-4$ if $v\in S_1$.  First, suppose $v\in
B_1'\setminus S_1$.  By construction, $v$ has at most $k+2-\card{B_1'}$
neighbors in $G^2\setminus (B_1'\cup B_2'\cup D)$.  So, $v$ appears in
$J_i\setminus T_j$, due to neighbors in $G^2\setminus (B_1'\cup B_2'\cup D)$,
at most $k+2-\card{B_1'}$ times.  The number of times $v$ appears in
$T_j\setminus T_j'$ is at most $d^+_D(v)\le
\card{B_1'}-1$.  Thus, $v$ appears in $J_i\setminus T_j'$ at most
$(k+2-\card{B_1'})+\card{B_1'}-1=k+1$ times.  So $v$ is colored when the
game ends.  When $v\in S_1$, a similar analysis shows $v$ appears in
$J_i\setminus T_j$ at most $k+2-\card{B_1'}+4$ times and in
$T_j\setminus T_j'$ at most $14$ times.  Thus, $v$ appears in
$J_i\setminus T_j'$ at most $k+1$ times.  So 
when the game ends, $v$ is colored.
\end{enumerate}
This completes the proof that Painter wins the $(k+2)$-painting game on $G^2$.
\end{proof}

\section*{Acknowledgment}
Thanks to an anonymous referee, whose careful reading of earlier versions of
the paper caught numerous inaccuracies, as well as more serious mistakes in a
previous proof of Lemma~\ref{prop:40:stronger}.

\bibliographystyle{siam}
\footnotesize{
\bibliography{GraphColoring}

\begin{thebibliography}{10}

\bibitem{AlonT92}
{\sc N.~Alon and M.~Tarsi}, {\em Colorings and orientations of graphs},
  Combinatorica, 12 (1992), pp.~125--134.

\bibitem{blp13DM}
{\sc M.~Bonamy, B.~L{\'e}v{\^e}que, and A.~Pinlou}, {\em {Graphs with maximum
  degree $\Delta \geq 17$ and maximum average degree less than 3 are list
  2-distance $(\Delta+ 2)$-colorable}}, Discrete Math., 317 (2014), pp.~19--32.
\newblock Preprint available at: \url{https://arxiv.org/abs/1301.7090}.

\bibitem{blp14YEUJC}
\leavevmode\vrule height 2pt depth -1.6pt width 23pt, {\em List coloring the
  square of sparse graphs with large degree}, European J. Combin., 41 (2014),
  pp.~128--137.
\newblock Preprint available at: \url{https://arxiv.org/abs/1308.4197}.

\bibitem{BorodinGINT04}
{\sc O.~V. Borodin, A.~N. Glebov, A.~O. Ivanova, T.~K. Neustroeva, and V.~A.
  Tashkinov}, {\em Sufficient conditions for planar graphs to be 2-distance
  {$(\Delta+1)$}-colorable}, Sib. \`Elektron. Mat. Izv., 1 (2004),
  pp.~129--141.

\bibitem{BorodinI09girth6}
{\sc O.~V. Borodin and A.~O. Ivanova}, {\em 2-distance {$(\Delta+2)$}-coloring
  of planar graphs with girth six and {$\Delta\ge18$}}, Discrete Math., 309
  (2009), pp.~6496--6502.

\bibitem{BorodinI09girth6choice}
{\sc O.~V. Borodin and A.~O. Ivanova}, {\em List 2-distance
  {$(\Delta+2)$}-coloring of planar graphs with girth six}, European J.
  Combin., 30 (2009), pp.~1257--1262.

\bibitem{c12}
{\sc C.~Charpentier}, {\em 2-distance coloring of not-so-sparse graphs},
  preprint. Available at:
  \url{www.labri.fr/perso/charpent/2dcol-not-so-sparse.pdf},  (2014).

\bibitem{CCP}
{\sc I.~Choi, D.~W. Cranston, and T.~Pierron}, {\em Painting and correspondence
  coloring of squares of planar graphs with no 4-cycles},  (2018).
\newblock Preprint available at: \url{https://arxiv.org/abs/1806.07204}.

\bibitem{DvorakKNS08}
{\sc Z.~Dvo{\v{r}}{\'a}k, D.~Kr{\'a}l, P.~Nejedl{\'y}, and
  R.~{\v{S}}krekovski}, {\em Coloring squares of planar graphs with girth six},
  European J. Combin., 29 (2008), pp.~838--849.
\newblock Preprint available at:
  \url{kam.mff.cuni.cz/~kamserie/serie/clanky/2005/s727.ps}.

\bibitem{Galvin95}
{\sc F.~Galvin}, {\em The list chromatic index of a bipartite multigraph}, J.
  Combin. Theory Ser. B, 63 (1995), pp.~153--158.

\bibitem{WangL03}
{\sc W.-F. Wang and K.-W. Lih}, {\em Labeling planar graphs with conditions on
  girth and distance two}, SIAM J. Discrete Math., 17 (2003), pp.~264--275.

\bibitem{wegner77}
{\sc G.~Wegner}, {\em Graphs with given diameter and a coloring problem},
  Technical Report, University of Dortmund,  (1977).

\end{thebibliography}
}

\end{document}